\DeclareFontFamily{U}{mathx}{\hyphenchar\font45}
\DeclareFontShape{U}{mathx}{m}{n}{
      <5> <6> <7> <8> <9> <10>
      <10.95> <12> <14.4> <17.28> <20.74> <24.88>
      mathx10
      }{}
\DeclareSymbolFont{mathx}{U}{mathx}{m}{n}
\DeclareMathAccent{\widecheck}{0}{mathx}{"71}
\DeclareMathAccent{\wideparen}{0}{mathx}{"75}
\newcommand{\C}{\mathbb{C}}
\newcommand{\R}{\mathbb{R}}
\newcommand{\Z}{\mathbb{Z}}
\newcommand{\N}{\mathbb{N}}
\newcommand{\F}{\mathbb{F}}
\DeclareMathOperator{\mult}{mult}
\def\XXint#1#2#3{{\setbox0=\hbox{$#1{#2#3}{\int}$}
     \vcenter{\hbox{$#2#3$}}\kern-.5\wd0}}
\newtheorem{theorem}{Theorem}[section]
\newtheorem{lemma}[theorem]{Lemma}
\newtheorem{corollary}[theorem]{Corollary}
\theoremstyle{definition}
\newtheorem{definition}[theorem]{Definition}
\newtheorem{remark}[theorem]{Remark}
\numberwithin{equation}{section}
\begin{document}
{\allowdisplaybreaks 

\renewcommand{\labelenumii}{\arabic{enumi}.\arabic{enumii}}

\title{On an analogue of BRK-type sets in finite fields}
\author{Madeline Forbes}
\date{July 8, 2026}
\subjclass[2020]{Primary 05B25; Secondary 11T99}
\keywords{Besicovitch-Rado-Kinney sets, finite fields, polynomial method, method of multiplicities} 

\newcommand{\Addresses}{{
\bigskip
\footnotesize

{\textsc{Department of Mathematics, 1984 Mathematics Road, University of British Columbia, Vancouver, BC V6T 1Z2, Canada}} \par\nopagebreak
 \textit{E-mail address}: \texttt{mforbes3@student.ubc.ca}
}}

\maketitle

\begin{abstract}
A Besicovitch-Rado-Kinney (BRK) set in $\R^n$ contains a hypersphere of every radius. In $\F_q^n$, BRK-type sets of degree $\ell$ analogously contain a family of $(n-1)$-dimensional surfaces, parametrized by a dilation factor and determined by a fixed homogeneous polynomial of degree $\ell$.
We define $(n,d)$-BRK-type sets of degree $\ell$, 
which contain a family of $d$-dimensional sets parametrized by an $(n-d)$-dimensional dilation factor and determined by fixed homogeneous polynomials of degree $\ell$. 
We use the polynomial method to obtain a lower bound $|S| \gtrsim_{n, \ell} q^n$ on $(n,d)$-BRK-type sets $S$ of degree $\ell$. 
We obtain an improved lower bound $|S| \geq \frac{(q-1)^n}{(\ell + 1 - 2\ell/q)^n}$ by implementing the method of multiplicities; 
this is the same bound obtained by Trainor on BRK-type sets of degree $\ell$, and we obtain this bound independently of $d$.
\end{abstract}

\section{Introduction}\label{Introduction}
Let $\F_q$ be a finite field with $q$ elements, and let $n \geq 2$. 
We say that $K \subset \F_q^n$ is a Kakeya set if it contains a line in every direction: 
for every nonzero $a \in \F_q^n$, 
there exists $b \in \F_q^n$ with \[\{ax + b: x \in \F_q\} \subset K.\] 
As an analogue to the famous Euclidean Kakeya problem, 
Wolff \cite{Wolff2} introduced the finite field Kakeya problem, 
which seeks a lower bound on the cardinality of Kakeya sets in $\F_q^n$. 
Given that Kakeya sets are ``large'' in the sense that they contain many lines,
he conjectured that Kakeya sets in $\F_q^n$ must also have a large cardinality: 
specifically, 
he conjectured that such sets should have cardinality at least $c_nq^n$, 
where $c_n$ is a constant depending only on $n$. 
This lower bound was first obtained by
Dvir \cite{Dvir} using the polynomial method. 
Dvir, Kopparty, Saraf and Sudan \cite{DKSS} later used an extension of the polynomial method, 
the method of multiplicities, 
to obtain a near-optimal lower bound:
it is possible to construct Kakeya sets of only slightly larger cardinality than their lower bound \cite{Saraf-Sudan}. As in the Euclidean setting, these objects are ‘large’ in a dimensional sense, but our focus is on finite-field analogues.

A natural way to extend the idea of sets in $\F_q^n$ containing many lines is to consider sets in $\F_q^n$ containing many curves, and to see if they must also have a large cardinality. This has been studied by Ellenberg, Oberlin and Tao \cite{EOT}, who obtained a generalization of Dvir's result by showing existence of lower bounds on sets in $\F_q^n$ containing many irreducible algebraic curves. Warren and Winterhof \cite{WW} obtained an explicit bound $\bigl(\frac{q-1}{2n}\bigr)^n$ in the specific case of conical Kakeya sets: sets containing many hyperbolas or parabolas. 

Another variant on Kakeya sets are Besicovitch-Rado-Kinney (BRK) sets, which in two dimensions contain many circles: specifically, BRK sets contain a circle of every radius. In higher dimensions, this is generalized to sets that contain a hypersphere of every radius. Such sets were first considered in the Euclidean case by Besicovitch-Rado \cite{Besicovitch-Rado} and Kinney \cite{Kinney}, who each constructed BRK sets of Lebesgue measure $0$. As an analogue to the Euclidean Kakeya problem, Kolasa and Wolff \cite{Kolasa-Wolff} and Wolff \cite{Wolff1} considered BRK sets in $\R^n$; they showed that such sets are large in the sense that they must have Hausdorff dimension $n$. 

BRK sets have been also studied over $\F_q^n$. In \cite{MWW}, Makhul, Warren and Winterhof obtained lower bounds of order $q^n$ on BRK sets in $\F_q^n$, where they take a sphere of radius $r$ in $\F_q^n$ to be a subset of the form 
\[\{x \in \F_q^n: (x_1 - a_1)^2 + \cdots + (x_n - a_n)^2 = r\}.\] 
This is similar to the definition of a hypersphere of radius $r$ in $\R^n$. Observe that the highest homogeneous part of the polynomial determining each such sphere is always $\sum_{i=1}^n x_i^2$; the lower order terms in the polynomial correspond to a translation of the centre of the sphere. It is thus natural to extend this construction to sets that contain many hypersurfaces determined by polynomials of degree $\ell$, where the highest homogeneous part of the polynomials are identical (up to dilation) and lower degree terms may vary. Multiplying each polynomial by a dilation factor then corresponds to choosing the radius of each sphere. Trainor \cite{Trainor} used this to generalize the notion of BRK sets in $\F_q^n$ by defining BRK-type sets of degree $\ell$.

\begin{definition} \cite[Definition 1.1]{Trainor} \label{BRK-type-1}
    If $f \in \F_q [s_1, . . . , s_{n-1}]$ is a homogeneous polynomial, let $P_f \subset
\F_q [s_1, . . . , s_{n-1}]$ be the set of all polynomials with homogeneous part of highest degree equal to $f$. \\
Let $\ell \geq 2$. We say that $S \subset \F_q^n$
is a BRK-type set of degree $\ell$ if there exists a polynomial
$g \in \F_q [s_1, . . . , s_{n-1}]$, with $\deg g = \ell$, so that the following holds: for any $\rho\in \F_q$ , there exist $a = a(\rho) \in \F_q^n$
and $g_\rho \in P_g$ such that 
\[\Lambda_\rho := \{a + \rho(\lambda, g_\rho(\lambda)) :\lambda \in \F_q^{n-1} \} \subset S.\]
\end{definition}
\bigbreak

As an example of such a set, consider the case where $\ell = 2$ and $g(x) = x_1^2 + \cdots + x_{n-1}^2$. With this choice of $g$, up to an added constant, each $g_\rho(x)$ can be written in the form $(x_1 - b_1)^2 + \cdots + (x_{n-1} - b_{n-1})^2$ for some $\{b_i\}_{i=1}^n \subset \F_q$. Up to translation, our set then contains every possible dilation of an $(n-1)$-dimensional paraboloid.

Trainor \cite{Trainor} used the method of multiplicities to establish a lower bound on such sets of order $(q/\ell)^n$ for $n \geq 2$, and using inclusion-exclusion strengthened this to a bound of order $q^n/(2\ell)$ for $n \geq 3$. Given that variants on Kakeya sets in the literature tend to be defined as either one-dimensional families of $(n-1)$-dimensional sets or $(n-1)$-dimensional families of one-dimensional sets, Trainor also proposed ``bridging this gap'' by defining BRK-type sets containing $(n-d)$-dimensional families of $d$-dimensional sets determined by polynomials, where $1 \leq d \leq n-1$. We formulate this proposed definition in a precise manner below.

\begin{definition} \label{BRK-type-general} Let $S \subset \F_q^n$ and $\ell \geq 2$. Suppose that there exist homogeneous polynomials $h_1, \dots, h_{n-d} \in \F_q[x_1, \dots, x_{d}]$ of degree $\ell$, so that for every $\rho = (\rho_1, \dots, \rho_{n-d}) \in \F_q^{n-d}$, there exist $a_\rho \in \F_q^n$ and $\{g_{\rho,i}\}_{i=1}^{n-d} \subset \F_q[x_1, \dots, x_{d}]$, with each $g_{\rho, i} \in \mathcal{P}_{h_i}$, such that
\[\Lambda_\rho := \{\Gamma_\rho(t) := a_\rho + (t, \rho_1 g_{\rho,1}(t), \dots, \rho_{n-d} g_{\rho,n-d}(t)) :  t \in \F_q^d\} \subset S.\]
Then we say that $S$ is an $(n,d)$-BRK-type set of degree $\ell$.
\end{definition}

An $(n,d)$-BRK-type set of degree $\ell$ encodes an $(n-d)$-parameter family of $d$-dimensional sets. These sets are parametrized by a $(n-d)$-dimensional dilation factor, and  are determined by polynomials which have a common highest homogeneous part of degree $\ell$ in each coordinate. In particular, $(n, 1)$-BRK-type sets contain an $(n-1)$-dimensional family of algebraic curves, and $(n, n-1)$-BRK-type sets contain a one-dimensional family of hypersurfaces. 

We briefly discuss two simple examples to illustrate both cases. First, $(n, 1)$-BRK-type sets of degree $1$ contain lines in each direction $a \in \{(1, \dots, 1, \rho_1, \dots, \rho_{n-d}): \rho \in \F_q^d\} \subset \F_q^n$. This is similar to the definition of a Kakeya set, but we require only a subset of line directions to appear within the set. Any Kakeya set in $\F_q^n$, then, must contain an $(n,1)$-BRK-type set of degree $1$. Second, after a reparametrization, $(n, n-1)$-BRK-type sets of degree $\ell=2$ coincide with BRK-type sets of degree $\ell=2$ as in Definition 1.1. Then the set described below Definition 1.1, which contains every possible dilation of an $(n-1)$-dimensional paraboloid, is an example of an $(n, n-1)$-BRK-type set of degree $2$. Note that Definition 1.1 is generally distinct from Definition 1.2 in the case $(n, n-1)$: in Definition 1.2, the coordinates that parametrize the subset are not multiplied by the dilation factor. The definitions are thus the same, up to reparametrization, only in the case $\ell = 2$.

When $1 < d < n-1$, one may interpret the constituent subsets geometrically by considering their projections onto subsets of their coordinates. As an example, we consider the case where each $g_{i, \rho}(t)$ is equal to $h_i(t)$, with $d = \ell =2$, $n=4$, $h_1(t) = t_1^2$, and $h_2(t) = t_2^2$. Defining $\pi_{ij}(x) = (x_i, x_j)$, by construction $\pi_{13}(S)$ and $\pi_{24}(S)$ each contain a parabola of every aperture. 

Our main result is the following:

\begin{theorem}\label{pm-bound}
    Let $S \subset \F_q^n$. Suppose that $S$ is a $(n, d)$-BRK-type set of degree $\ell$, where $\ell \geq 2$. Then $|S| \geq \binom{\lfloor\frac{q-1}{\ell}\rfloor + n}{n}$.
\end{theorem}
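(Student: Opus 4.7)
The plan is a polynomial-method argument by contradiction. Set $m := \lfloor (q-1)/\ell \rfloor$ and suppose $|S| < \binom{m+n}{n}$. Since $\binom{m+n}{n}$ is the $\F_q$-dimension of the space of polynomials of total degree at most $m$ in $n$ variables, a standard linear-algebra argument produces a nonzero polynomial $P \in \F_q[x_1, \ldots, x_n]$ of total degree at most $m$ vanishing on $S$. The goal is to force $P \equiv 0$, giving the contradiction.

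The key device is a \emph{weighted} degree: assign weight $1$ to $x_1, \ldots, x_d$ and weight $\ell$ to $x_{d+1}, \ldots, x_n$. Writing $\mathrm{wt}(\alpha) := \sum_{i \leq d} \alpha_i + \ell \sum_{i > d} \alpha_i$, let $W$ be the maximal weight attained by a monomial with nonzero coefficient in $P$, and let $P_{\mathrm{top}}$ denote the sum of those maximal-weight monomials. For each $\rho \in \F_q^{n-d}$, set $Q_\rho(t) := P(\Gamma_\rho(t))$. Because $\Gamma_\rho(t)_i$ is linear in $t$ for $i \leq d$ and of $t$-degree $\ell$ for $i > d$ (since $\deg g_{\rho, i} = \deg h_i = \ell$), the total $t$-degree of $Q_\rho$ is at most $W \leq \ell \deg P \leq \ell m \leq q - 1$. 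As $\Lambda_\rho \subset S$ forces $Q_\rho$ to vanish identically on $\F_q^d$, and each $\deg_{t_i} Q_\rho < q$, the polynomial $Q_\rho$ is identically zero in $\F_q[t_1, \ldots, t_d]$.

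I then extract the top-$t$-degree portion of $Q_\rho$. Replacing each coordinate of $\Gamma_\rho(t)$ by its leading $t$-term shows that the homogeneous degree-$W$ part of $Q_\rho$ in $t$ equals
\[\mathcal{R}(t, \rho) := P_{\mathrm{top}}\bigl(t_1, \ldots, t_d,\, \rho_1 h_1(t), \ldots, \rho_{n-d} h_{n-d}(t)\bigr),\]
so $\mathcal{R}(t, \rho) = 0$ in $\F_q[t]$ for every $\rho \in \F_q^{n-d}$. Viewing $\mathcal{R}$ as an element of $\F_q[t, \rho]$, its $\rho$-degree is at most $\deg P \leq m < q$; thus for every fixed monomial $t^\delta$, the corresponding coefficient is a polynomial in $\rho$ of degree less than $q$ vanishing on all of $\F_q^{n-d}$, hence identically zero. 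Therefore $\mathcal{R} \equiv 0$ in $\F_q[t, \rho]$.

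Finally, I group $\mathcal{R}$ by the monomials $\rho^\gamma$. Writing each $\alpha$ with $\mathrm{wt}(\alpha) = W$ as $\alpha = (\alpha', \gamma)$ with $\alpha' \in \N^d$ and $\gamma \in \N^{n-d}$, the constraint $\mathrm{wt}(\alpha) = W$ reads $|\alpha'| = W - \ell|\gamma|$, and the coefficient of $\rho^\gamma$ in $\mathcal{R}$ equals
\[h(t)^\gamma \sum_{|\alpha'| = W - \ell|\gamma|} c_{(\alpha', \gamma)}\, t^{\alpha'}, \qquad h(t)^\gamma := h_1(t)^{\gamma_1} \cdots h_{n-d}(t)^{\gamma_{n-d}}.\]
Since each $h_i$ is a nonzero polynomial and $\F_q[t_1, \ldots, t_d]$ is an integral domain, $h(t)^\gamma$ is nonzero and may be cancelled, leaving an $\F_q$-linear relation among the linearly independent monomials $t^{\alpha'}$ that forces every $c_{(\alpha', \gamma)} = 0$. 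Hence $P_{\mathrm{top}} \equiv 0$, contradicting the definition of $W$. The main nontrivial design choice is the introduction of the weighted degree so that a single ``top-weight'' identity captures all the information in $P$ at once; once that is in place, the remaining steps are routine polynomial-method coefficient extractions.
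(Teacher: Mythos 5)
Your proof is correct, and it follows the same overall polynomial-method strategy as the paper (build a low-degree $f$ vanishing on $S$, restrict to $\Gamma_\rho(t)$, note it becomes the zero polynomial for $t$-degree reasons, then extract a top coefficient to get a vanishing identity in $\rho$), but the mechanism for extracting the ``top'' information is genuinely different and in some respects cleaner. The paper uses the graded lexicographic order $\prec$ on monomials, writes each $h_i = c_{\alpha^i}t^{\alpha^i} + H_i$ with $t^{\alpha^i}\succ H_i$, and isolates the single leading monomial $(t,t^{\alpha^1},\dots,t^{\alpha^{n-d}})^{\beta^1}$ of $f_\rho(t)$; its coefficient $P(\rho)=\sum_j b_{\beta^j}\rho^{(\beta^j)'}c^{(\beta^j)'}$ must vanish, and Schwartz--Zippel gives the contradiction. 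This extraction only behaves as claimed when every $\rho_i\neq 0$ (otherwise the ``leading'' coefficient degenerates), which is why the paper works on $(\F_q\setminus\{0\})^{n-d}$; one also needs to observe that the distinct $\beta^j$ give distinct $\rho$-monomials, so $P(\rho)$ is a genuinely nonzero polynomial. Your weighted degree $\mathrm{wt}(\alpha)=\sum_{i\leq d}\alpha_i+\ell\sum_{i>d}\alpha_i$ replaces the lex order with a single integer grading tailored to the $t$-degree growth of $\Gamma_\rho$, so you isolate the entire top-weight part $P_{\mathrm{top}}$ at once rather than one monomial. The resulting identity $\mathcal{R}(t,\rho)=P_{\mathrm{top}}(t,\rho_1 h_1(t),\dots,\rho_{n-d}h_{n-d}(t))\equiv 0$ in $\F_q[t,\rho]$ holds uniformly for all $\rho\in\F_q^{n-d}$ (the formula remains correct even when some $\rho_i=0$, since then both sides lose those terms), and reading off coefficients of $\rho^\gamma$ and cancelling $h(t)^\gamma$ in the integral domain $\F_q[t]$ kills all of $P_{\mathrm{top}}$ directly, with no need to worry about collisions or nontriviality of the extracted $\rho$-polynomial. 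The trade-off is that the lex-order machinery the paper sets up (Lemmas 2.12--2.15) is reused in the multiplicity-enhanced proof of Theorem 1.5, whereas the weighted-degree framing would need to be re-adapted there; but as a standalone proof of Theorem 1.4 your version is a clean and valid alternative.
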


This establishes a lower bound $|S| \gtrsim_{n, \ell}q^n$ on $(n,d)$-BRK-type sets of degree $\ell$, for $\ell \geq 2$. Such a bound is best possible, up to improvement of the associated constant. Interestingly, this bound does not depend on $d$, the dimension of the constituent subsets. Our proof uses the polynomial method: if $|S| < \binom{\lfloor\frac{q-1}{\ell}\rfloor + n}{n}$, we can construct a nonzero polynomial in $\rho$ which is shown to have too many zeros. It is possible to slightly modify our argument to obtain the $\ell=2$ bound for the $\ell=1$ case, but we omit this as the $\ell=1$ case is addressed in our improved bound below.

 We also implement the method of multiplicities as inspired by \cite{Trainor} to obtain an improved lower bound. In particular, we obtain the same lower bound on $(n,d)$-BRK-type sets as Trainor obtained on her BRK-type sets, independently of $d$.

\begin{theorem}\label{mm-bound}
    Let $S \subset \F_q^n$. Suppose that $S$ is an $(n, d)$-BRK-type set of degree $\ell$, where $\ell \geq 1$. Then $|S| \geq \frac{(q-1)^n}{(\ell + 1 - 2\ell/q)^n}$.
\end{theorem}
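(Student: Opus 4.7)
The plan is to adapt the method of multiplicities argument of \cite{Trainor} to the $(n,d)$-BRK-type setting of Definition \ref{BRK-type-general}, proceeding by contradiction. Suppose $|S| < \frac{(q-1)^n}{(\ell+1-2\ell/q)^n}$, and choose integers $D \geq 1$ and $m \geq 1$ satisfying $D\ell < mq$ together with the parameter-counting inequality
\[ \binom{n+D}{n} \;>\; |S|\, \binom{n+m-1}{n}. \]
Since each multiplicity-$m$ vanishing condition at a point imposes at most $\binom{n+m-1}{n}$ linear constraints on the $\binom{n+D}{n}$-dimensional space of polynomials of degree $\leq D$ in $\F_q[x_1,\ldots,x_n]$, this inequality yields a nonzero polynomial $P$ of degree $\leq D$ vanishing to multiplicity $\geq m$ on $S$.

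For each $\rho \in \F_q^{n-d}$, define the restriction $F_\rho(t) := P(\Gamma_\rho(t)) \in \F_q[t_1,\ldots,t_d]$. Since each coordinate of $\Gamma_\rho$ is a polynomial in $t$ of $t$-degree at most $\ell$, one has $\deg_t F_\rho \leq D\ell$. By the chain rule for Hasse derivatives, $F_\rho$ inherits multiplicity $\geq m$ at every point of $\F_q^d$ from $P$'s multiplicity on $\Lambda_\rho \subset S$. The multiplicity Schwartz--Zippel lemma of \cite{DKSS}, applied with $D\ell < mq$, then forces $F_\rho \equiv 0$ for every $\rho$.

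To close the argument, show that $F_\rho \equiv 0$ for every $\rho$ implies $P \equiv 0$. Begin with the top-$t$-degree component of $F_\rho$, obtained by replacing each $\rho_i g_{\rho,i}(t)$ by its $t$-leading term $\rho_i h_i(t)$: the piece of $t$-degree $D\ell$ equals
\[ \sum_{|\beta|=D} c_{0,\beta}\, \rho^\beta \prod_{i=1}^{n-d} h_i(t)^{\beta_i}, \]
where $c_{0,\beta}$ denotes the coefficient of $x_{d+1}^{\beta_1}\cdots x_n^{\beta_{n-d}}$ in $P$. Regarded as a polynomial in $\rho$ of total degree $D$ (and $D < q$ being arranged by the parameter choice), Schwartz--Zippel on $\F_q^{n-d}$ forces each coefficient (a polynomial in $t$) to vanish; since each $h_i$ is nonzero, this gives $c_{0,\beta} = 0$ for all $|\beta|=D$. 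Iterating through successively lower $t$-degree components of $F_\rho$ --- at stage $k$ the relevant component involves the coefficients $c_{\alpha,\beta}$ with $|\alpha|=k$, $|\beta|=D-k$, yielding a polynomial in $\rho$ of degree $D-k < q$ --- one kills all coefficients of the top homogeneous part of $P$, then repeats with its lower homogeneous components, concluding $P \equiv 0$ and contradicting the construction.

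The main obstacle is in the iteration: at each stage, one must verify the $\rho$-degree of the extracted polynomial remains less than $q$, and one must keep track of cross-contributions from the lower-degree remainders of $g_{\rho,i}$ and from the unknown shift $a_\rho$, which enter the subleading $t$-degree components and can interleave stages associated to different homogeneous components of $P$. A secondary but essential task is calibrating $D$ and $m$ so that $\binom{n+D}{n}/\binom{n+m-1}{n}$ matches $\frac{(q-1)^n}{(\ell+1-2\ell/q)^n}$; the rational factor $\ell+1-2\ell/q$ emerges here from balancing $D < q$ (needed for Schwartz--Zippel in $\rho$) against $D\ell < mq$ (needed to force $F_\rho \equiv 0$) via careful binomial estimates.
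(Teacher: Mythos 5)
Your high-level framework is right (contradiction, existence of a low-degree polynomial vanishing with multiplicity $m$ on $S$, restriction $F_\rho \equiv 0$ via multiplicity Schwartz--Zippel), and coincides with the paper's. The gap is in the extraction step, and it is fatal in two ways.

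First, the requirement ``$D < q$'' cannot be arranged. To approach the bound $(q-1)^n/(\ell+1-2\ell/q)^n$, you need $\binom{D+n}{n}/\binom{m+n-1}{n}$ to approach $(D/m)^n$ with $D/m \to (q-1)/(\ell+1-2\ell/q)$, which forces both $D$ and $m$ to grow without bound. (Indeed, the paper takes $D = k(q-1)-1$ with $k$ a large multiple of $q$.) With $D$ fixed below $q$, the ratio $\binom{D+n}{n}/\binom{m+n-1}{n}$ is only polynomial in $n$, while the target bound is exponential in $n$; so the calibration you defer to the end simply cannot be done. Once $D \geq q$, plain Schwartz--Zippel in $\rho$ no longer lets you conclude that a degree-$D$ polynomial in $\rho$ vanishing on $(\F_q\setminus\{0\})^{n-d}$ is identically zero. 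The paper resolves this by using multiplicity in the $\rho$-variable too: the entire family $\{P_\gamma\}_{|\gamma|<k}$ of extracted coefficients is interpreted, via Lemma~\ref{HD-reduction}, as the Hasse derivatives of a single polynomial in $\rho$ vanishing with multiplicity $k$, so the extended Schwartz--Zippel lemma (Lemma~\ref{extended-SZ}) applies even though the degree exceeds $q$. Your plan never uses Hasse derivatives in $\rho$, so this mechanism is absent.

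Second, the iteration through lower $t$-degree components of $F_\rho$ does not produce polynomials in $\rho$. The top $t$-degree coefficient is a polynomial in $\rho$ only because it depends on the $g_{\rho,i}$ solely through their fixed leading parts $h_i$; every lower coefficient also involves $a_\rho$ and the subleading coefficients of $g_{\rho,i}$, which Definition~\ref{BRK-type-general} allows to depend on $\rho$ arbitrarily. So the ``coefficients'' you would apply Schwartz--Zippel to at later stages are not polynomials in $\rho$ at all, and the stage-by-stage killing of $c_{\alpha,\beta}$ does not get off the ground. The paper avoids ever touching those lower coefficients: it uses the graded lexicographic order to extract only a single leading coefficient from each $f_{\gamma,\rho}$, which is a genuine polynomial in $\rho$, and gets the needed extra relations by varying $\gamma$ (the Hasse-derivative order) rather than by peeling off $t$-degrees. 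You flag the dependence on $a_\rho$ and the remainders of $g_{\rho,i}$ as ``cross-contributions'' to be tracked, but they cannot be tracked within your scheme; the leading-monomial device is what makes them irrelevant.
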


The proof of Theorem \ref{mm-bound} is an extension of our proof for Theorem \ref{pm-bound}: if $|S| < \frac{(q-1)^n}{(\ell + 1 - 2\ell/q)^n}$, we can construct an analogous nonzero polynomial in $\rho$ that vanishes on all $\rho \in (\F_q \setminus \{0\})^{n-d}$ with high multiplicity. This polynomial is shown to have too many zeros, counting multiplicities.

In Section \ref{preliminaries}, we set notation, present our main tools (the polynomial method, Hasse derivatives and multiplicities, and the graded lexicographic order), and give a few preliminary results. We then prove Theorem \ref{pm-bound} via the polynomial method in Section \ref{pm-proof}. In Section \ref{mm-proof}, we upgrade this to Theorem \ref{mm-bound} using the method of multiplicities.

\section{Preliminaries} \label{preliminaries} 

\subsection{Notation}\label{notation}

We briefly introduce some notation which we will use throughout. We fix $n$ and $d$ as in Definition \ref{BRK-type-general}, and use $m$ in lemmas which will later be applied to both cases $m=n$ and $m=d$. For $x = (x_1, \dots, x_m) \in \F_q^m$ and $\alpha = (\alpha_1, \dots, \alpha_m) \in \Z_{\geq 0}^m$, where $m \in \N$, we write 
\[x^\alpha = \prod_{i=1}^m x_i^{\alpha_i}.\]
If we also have $\beta = (\beta_1, \dots, \beta_m)$, the multi-index binomial coefficients are defined by 
\[\binom{\alpha}{\beta} = \prod_{i=1}^n \binom{\alpha_i}{\beta_i},\]
with the convention that $\binom{\alpha}{\beta} = 0$ if $\alpha_i < \beta_i$ for any $i$.
We say that the magnitude of the exponent $\alpha$ is $|\alpha| = \sum_{i=1}^m \alpha_i$. For $\alpha = (\alpha_1, \dots, \alpha_n) \in \Z_{\geq 0}^n$, we write \[\alpha' = (\alpha_{d+1}, \dots, \alpha_n)\] to denote taking only the final $n-d$ coordinates of $\alpha$.

Let $f(x)= \sum_{\alpha \in \Z_{\geq 0}^m} c_\alpha x^\alpha \in \F_q[x_1, \dots, x_m]$. We say that the degree of $f$ is 
\[\deg f := \max\{|\alpha|: c_\alpha \neq 0\}.\]
We say that $f$ is nonzero if there exists $\alpha$ so that $c_\alpha \neq 0$; this is not, in general, equivalent to $f$ vanishing everywhere, as it is possible for nonzero polynomials of degree greater than or equal to $q$ to vanish everywhere. \\

Finally, we define a piece of notation which will be useful later on.
For polynomials $\{f_i(x)\}_{i=1}^n \subset \F_q[x_1, \dots, x_d]$ and $\beta \in \Z_{\geq 0}^n$, we define
\[E_\beta(f_1(x), \dots, f_n(x)) := \prod_{i=1}^n (f_i(x))^{\beta_i}.\]
This is the polynomial obtained by substituting the tuple $(y_1 = f_1(x), \dots, y_n = f_n(x))$ into the monomial $y^\beta$. In particular, if each $f_i(x)$ is a monomial in $x$, $E_\beta(f_1(x), \dots, f_n(x))$ is also a monomial.

\subsection{The polynomial method}
\bigbreak

Dvir's proof \cite{Dvir} that Kakeya sets in $\F_q^n$ have cardinality bounded below by $c_nq^n$ uses what is now known as the polynomial method, and is centred around the idea that a polynomial of low degree cannot have too many zeros. If a Kakeya set $K$ is too small, one can obtain a nonzero polynomial of low degree that vanishes at every point in $K$. Evaluating this polynomial on the parametrized lines contained in $K$, we then obtain a single-variable polynomial of low degree that is zero everywhere, and thus is the zero polynomial. From this, it can be shown that our original polynomial must also vanish at too many points outside of $K$, which gives a contradiction.

We present two essential tools used in the polynomial method. The first is a statement about how small a set needs to be to guarantee that we can find a nonzero polynomial vanishing on it.
This is essentially a statement about solution sets of systems of homogeneous linear equations, as we can treat coefficients in polynomials vanishing at certain points as variables in a system of linear equations.

\begin{lemma}\cite[Lemma 2.3]{Guth}\label{PC}
    Let $S \subset \F_q^n$. If $|S| < \binom{D + n}{n}$, there exists a nonzero polynomial $f \in \F_q[x_1, \dots, x_n]$, with $\deg f \leq D$, such that $f(s) = 0$ for all $s \in S$.
\end{lemma}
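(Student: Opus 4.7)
The plan is a standard dimension-count in the vector space of low-degree polynomials. First I would identify the $\F_q$-vector space
\[V_D := \{f \in \F_q[x_1, \dots, x_n] : \deg f \leq D\},\]
treating each formal polynomial as its tuple of coefficients (so that "nonzero" means "not the zero element of $V_D$," consistent with the notation convention fixed earlier in the paper). A basis for $V_D$ is given by the monomials $x^\alpha$ with $\alpha \in \Z_{\geq 0}^n$ and $|\alpha| \leq D$, so I would compute $\dim_{\F_q} V_D$ by a stars-and-bars count: the number of such $\alpha$ equals the number of $(n+1)$-tuples $(\alpha_1, \dots, \alpha_n, \alpha_{n+1}) \in \Z_{\geq 0}^{n+1}$ with $\alpha_1 + \cdots + \alpha_{n+1} = D$, which is $\binom{D+n}{n}$.

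Next I would consider the evaluation map
\[\mathrm{ev}_S : V_D \to \F_q^{|S|}, \qquad f \mapsto \bigl(f(s)\bigr)_{s \in S}.\]
This is $\F_q$-linear, and a polynomial vanishes on $S$ if and only if it lies in $\ker(\mathrm{ev}_S)$. By the rank-nullity theorem,
\[\dim_{\F_q} \ker(\mathrm{ev}_S) \geq \dim_{\F_q} V_D - \dim_{\F_q} \F_q^{|S|} = \binom{D+n}{n} - |S|.\]
The hypothesis $|S| < \binom{D+n}{n}$ then forces $\dim_{\F_q} \ker(\mathrm{ev}_S) \geq 1$, so there exists a nonzero $f \in V_D$ with $f(s) = 0$ for every $s \in S$, as required.

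There is no real obstacle here; the only point that deserves a moment's care is the distinction, flagged in the Notation subsection, between "nonzero as a polynomial" (i.e., possessing some nonzero coefficient) and "not identically vanishing as a function on $\F_q^n$." The argument above produces a nonzero element of $V_D$ in the former sense, which is exactly what the statement asks for; the lemma makes no claim that $f$ fails to vanish somewhere on $\F_q^n \setminus S$, which would in general be false once $D \geq q$.
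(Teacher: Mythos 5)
Your argument is correct and is the standard dimension-count proof of this lemma. The paper itself does not prove it but simply cites Guth; your proof matches the argument given there (and everywhere this result appears): identify $V_D$ with a coefficient space of dimension $\binom{D+n}{n}$ via stars-and-bars, observe that evaluation on $S$ is a linear map to $\F_q^{|S|}$, and apply rank-nullity. Your closing remark correctly distinguishes ``nonzero as a formal polynomial'' from ``not identically vanishing as a function,'' which is exactly the convention the paper fixes in its Notation subsection.
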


The Schwartz-Zippel lemma is the second main tool in the polynomial method, and gives a bound on the number of zeros a polynomial can have over a Cartesian product of a set. 

\begin{lemma}\cite{Schwartz, Zippel}\label{SZ}
    Let $A \subset \F_q$. If $f \in \F_q[x_1, \dots, x_m]$ satisfies $\deg f \leq d$, then 
    \[|\{x \in A^m: f(x) = 0\}| \leq d|A|^{m-1}.\]
\end{lemma}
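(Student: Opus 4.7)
The plan is to prove the Schwartz--Zippel lemma by induction on the number of variables $m$, which is the standard approach. We assume throughout that $f$ is nonzero as a polynomial (if $f$ were identically zero, the inequality could fail when $d < |A|$, so this assumption is implicit in the statement).

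For the base case $m = 1$, a nonzero single-variable polynomial of degree at most $d$ has at most $d$ roots in $\F_q$, and hence at most $d = d |A|^0$ roots lying in $A$.

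For the inductive step, suppose the lemma holds for $m-1$ variables, and let $f \in \F_q[x_1, \dots, x_m]$ be nonzero of degree at most $d$. The key idea is to expand $f$ as a polynomial in $x_1$ with coefficients in $\F_q[x_2, \dots, x_m]$:
\[f(x_1, \dots, x_m) = \sum_{i=0}^{k} f_i(x_2, \dots, x_m)\, x_1^i,\]
where $k$ is the largest index for which $f_i$ is nonzero; note $k \leq d$ and $\deg f_k \leq d - k$. For each fixed tuple $(x_2, \dots, x_m) \in A^{m-1}$, we split into two cases. If $f_k(x_2, \dots, x_m) = 0$, then by the inductive hypothesis applied to $f_k$, there are at most $(d-k)|A|^{m-2}$ such tuples; each contributes at most $|A|$ zeros of $f$ when $x_1$ ranges over $A$. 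Otherwise, $f$ restricted to this tuple is a nonzero single-variable polynomial in $x_1$ of degree exactly $k$, contributing at most $k$ zeros; there are at most $|A|^{m-1}$ such tuples. Summing, we get
\[(d-k)|A|^{m-2} \cdot |A| + |A|^{m-1} \cdot k = d |A|^{m-1},\]
which closes the induction.

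There is no substantial obstacle here: the only subtlety is carefully tracking that $f_k$ is nonzero (by choice of $k$) so that the inductive hypothesis applies, and ensuring the convention about $\deg f$ and nonzero polynomials is consistent with the rest of the paper's usage. The argument is fully self-contained and fits on a half page.
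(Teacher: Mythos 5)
Your proof is correct, and it is the standard induction-on-variables argument for the Schwartz--Zippel lemma. Note that the paper does not actually prove this lemma; it is cited directly from Schwartz and Zippel, so there is no in-paper proof to compare against. Your argument handles the two cases cleanly, correctly tracks that $f_k$ is nonzero (so the inductive hypothesis applies) and that $\deg f_k \leq d-k$, and you are right that the nonzero hypothesis on $f$ is implicit in the statement as written.
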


\subsection{Hasse derivatives and multiplicities} 

The method of multiplicities extends the polynomial method, and uses the idea that polynomials of not too high degree cannot vanish at too many points with high multiplicity. This allows us to work with polynomials of degree greater than the size of the field, and thus obtain sharper lower bounds on quantities of interest. The notion of the multiplicity with which a polynomial vanishes at a point uses Hasse derivatives, which we define below.

\begin{definition}
    Let $f(x) \in \F_q[x_1, \dots, x_m]$, and let $\beta \in \Z_{\geq 0}^m$. We define the $\beta$-th Hasse derivative of $f$, denoted by $f^{(\beta)}$, to be the coefficient of $y^\beta$ in $f(x+y)$. Then
    \[f(x+y) = \sum_{\beta} f^{(\beta)}(x) y^\beta.\]
\end{definition}

In our proof of Theorem \ref{mm-bound}, we use the explicit form for Hasse derivatives, which we also state here.

\begin{lemma}\cite[Lemma 2.9]{Trainor}\label{HD-form}
    Let $f(x) = \sum_{\alpha} c_\alpha x^\alpha \in \F_q[x_1, \dots, x_m]$, and let $\beta \in \Z_{\geq 0}^m$. Then \[f^{(\beta)}(x) = \sum_{\alpha} c_\alpha \binom{\alpha}{\beta} x^{\alpha-\beta}.\]
\end{lemma}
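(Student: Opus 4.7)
The plan is to prove Lemma \ref{HD-form} directly from the definition of the Hasse derivative as the coefficient of $y^\beta$ in the expansion $f(x+y)$, combined with the multivariate binomial theorem applied monomial-by-monomial.

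First I would expand each monomial $(x+y)^\alpha$. Since $(x+y)^\alpha = \prod_{i=1}^m (x_i + y_i)^{\alpha_i}$, applying the ordinary binomial theorem in each coordinate yields
\[(x+y)^\alpha = \prod_{i=1}^m \sum_{\beta_i = 0}^{\alpha_i} \binom{\alpha_i}{\beta_i} x_i^{\alpha_i - \beta_i} y_i^{\beta_i} = \sum_{\beta \in \Z_{\geq 0}^m} \binom{\alpha}{\beta} x^{\alpha - \beta} y^\beta,\]
where the convention $\binom{\alpha}{\beta} = 0$ when any $\alpha_i < \beta_i$ automatically truncates the sum to the correct range.

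Next I would substitute this expansion termwise into $f(x+y) = \sum_\alpha c_\alpha (x+y)^\alpha$ and interchange the two finite sums. This gives
\[f(x+y) = \sum_\alpha c_\alpha \sum_\beta \binom{\alpha}{\beta} x^{\alpha - \beta} y^\beta = \sum_\beta \Bigl( \sum_\alpha c_\alpha \binom{\alpha}{\beta} x^{\alpha - \beta} \Bigr) y^\beta.\]
Comparing with the defining identity $f(x+y) = \sum_\beta f^{(\beta)}(x) y^\beta$ and reading off the coefficient of $y^\beta$ — which is well defined because $f(x+y) \in \F_q[x][y]$ has a unique representation as a polynomial in $y$ with coefficients in $\F_q[x]$ — produces the claimed formula.

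There is no serious obstacle; the only subtle point is that one must verify the coefficient of $y^\beta$ is genuinely well-defined as a polynomial in $x$, which follows since each fixed $\beta$ receives contributions only from finitely many $\alpha$ with $\alpha_i \geq \beta_i$ for all $i$, making the inner sum a finite sum of monomials in $x$. Once this is noted, matching coefficients of the monomials $y^\beta$ on both sides yields the identity.
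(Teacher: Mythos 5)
The paper does not prove this lemma — it cites it directly from \cite[Lemma 2.9]{Trainor} — so there is no internal proof to compare against. Your argument is correct and is the standard one: expand $(x+y)^\alpha$ coordinate-by-coordinate via the binomial theorem, noting that the convention $\binom{\alpha}{\beta}=0$ when some $\alpha_i<\beta_i$ makes the sum over all $\beta\in\Z_{\geq 0}^m$ well-posed, then swap the two finite sums and read off the coefficient of $y^\beta$ using the defining identity $f(x+y)=\sum_\beta f^{(\beta)}(x)y^\beta$. The only point that might deserve an explicit word is that the integer binomial coefficients are reduced modulo the characteristic of $\F_q$ when interpreted in $\F_q$ — the formula holds with that understanding — but this is implicit in the usual reading of the statement and does not affect the validity of the proof.
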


Now we define what it means for a polynomial over $\F_q^m$ to vanish at a point with a certain multiplicity. If we have a polynomial $p(x) \in \C[x]$, $p(x)$ vanishes at a point $a \in \C$ with multiplicity $k$ if $(x-a)^k \mid p(x)$ and $(x-a)^{k+1} \nmid p(x)$. Equivalently, $p(x)$ vanishes at $a \in \C$ with multiplicity $k$ if $p^{(k-1)}(a) = 0$ and $p^{(k)}(a) \neq 0$. This idea of using derivatives to measure the multiplicity with which a polynomial vanishes at a point can be extended to polynomials over finite fields and their Hasse derivatives.

\begin{definition}
    Let $a \in \F_q^m$, and let $A \subset \F_q^m$. We define the multiplicity of $f(x) \in \F_1[x_1, \dots, x_m]$ at $a$ to be \[\mult(f,a) := \sup\{M \in \Z_{\geq 0}: f^{(\beta)}(a) = 0 \text{ for all } \beta \in \Z_{\geq 0}^m \text{ with } |\beta| < M\}. \] If $\mult(f, a) \geq M$ for all $a \in A$, we say that $f$ vanishes on the set $A$ with multiplicity $M$.
\end{definition}

The following two lemmas provide lower bounds on the behaviour of multiplicities under taking Hasse derivatives and under function composition.

\begin{lemma}\cite[Lemma 2.4]{DKSS}\label{sum-derivative}
    Let $P \in \F_q[x_1, \dots, x_n]$ and $\beta \in \Z_{\geq 0}^n$. Then for $a \in \F_q^n$,
    \[\mult(P^{(\beta)}, a) \geq \mult(P, a) - |\beta|.\]
\end{lemma}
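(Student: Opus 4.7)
The plan is to first prove a composition identity for Hasse derivatives and then deduce the bound from it by unwinding definitions. The identity in question is
\[(P^{(\beta)})^{(\gamma)}(x) = \binom{\beta+\gamma}{\beta}\, P^{(\beta+\gamma)}(x) \qquad \text{for all } \beta, \gamma \in \Z_{\geq 0}^n.\]
Once this is in hand, the lemma is immediate. Set $M := \mult(P,a)$. For any $\gamma$ with $|\gamma| < M - |\beta|$, we have $|\beta+\gamma| < M$, so $P^{(\beta+\gamma)}(a) = 0$ by the definition of multiplicity, and therefore $(P^{(\beta)})^{(\gamma)}(a) = 0$ by the identity. Since this holds for every such $\gamma$, we conclude that $\mult(P^{(\beta)},a) \geq M - |\beta|$.

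To prove the identity, I would compute $P(x+y+z)$, viewed as a polynomial in the $3n$ variables $x_i, y_i, z_i$, in two ways. Grouping the argument as $(x+y)+z$ and applying the definition of Hasse derivative twice gives
\[P(x+y+z) = \sum_\gamma P^{(\gamma)}(x+y)\, z^\gamma = \sum_{\gamma, \delta} (P^{(\gamma)})^{(\delta)}(x)\, y^\delta z^\gamma,\]
so the coefficient of $y^\delta z^\gamma$ is $(P^{(\gamma)})^{(\delta)}(x)$. Grouping instead as $x + (y+z)$ and expanding $(y+z)^\alpha$ via the multi-index binomial theorem gives
\[P(x+y+z) = \sum_\alpha P^{(\alpha)}(x)\, (y+z)^\alpha = \sum_\alpha \sum_{\delta \leq \alpha} \binom{\alpha}{\delta}\, P^{(\alpha)}(x)\, y^\delta z^{\alpha-\delta}.\]
Matching the coefficient of $y^\delta z^\gamma$, which corresponds to the term with $\alpha = \delta+\gamma$, yields $\binom{\delta+\gamma}{\delta}\, P^{(\delta+\gamma)}(x)$. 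Comparing this with the expression obtained from the first grouping establishes the identity.

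The only nontrivial step is the composition identity; everything else is bookkeeping. I do not anticipate a serious obstacle, since the identity follows by direct symbolic manipulation from the defining property that $P^{(\beta)}(x)$ is the coefficient of $y^\beta$ in $P(x+y)$ together with the multi-index binomial theorem. The one thing to be slightly careful about is that in characteristic $p$ the scalar $\binom{\beta+\gamma}{\beta}$ may vanish in $\F_q$, but this only makes the identity $(P^{(\beta)})^{(\gamma)}(a) = \binom{\beta+\gamma}{\beta} P^{(\beta+\gamma)}(a)$ easier to use: we only need the right-hand side to vanish, which it does because $P^{(\beta+\gamma)}(a) = 0$ whenever $|\beta+\gamma| < M$.
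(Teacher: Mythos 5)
Your proof is correct. The paper does not prove this lemma itself—it cites it directly from DKSS \cite[Lemma 2.4]{DKSS}—and your argument (establishing the composition identity $(P^{(\beta)})^{(\gamma)} = \binom{\beta+\gamma}{\beta} P^{(\beta+\gamma)}$ by expanding $P(x+y+z)$ in two ways, then applying the definition of multiplicity) is precisely the standard proof given in that reference.
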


\begin{lemma}\cite[Proposition 2.5]{DKSS}\label{mult-composition}
    Let $P(x) \in \F_q[x_1, \dots, x_m]$, $Q(y) := (Q_1(y), \dots, Q_m(y)) \in (\F_q[y_1, \dots, y_j])^m$, and $a \in \F_q^j$. Then 
    \[\mult(P \circ Q, a) \geq \mult(P, Q(a)).\]
\end{lemma}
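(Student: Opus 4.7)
The plan is to reduce the desired inequality to a purely algebraic fact about the Taylor-like expansion of $P \circ Q$ around $a$ using Hasse derivatives. Setting $M := \mult(P, Q(a))$, the goal is to show that every Hasse derivative $(P \circ Q)^{(\gamma)}(a)$ vanishes whenever $|\gamma| < M$. Equivalently, viewing $(P \circ Q)(a + y)$ as a polynomial in $y \in \F_q[y_1, \dots, y_j]$, I want to show that every monomial in $y$ appearing in this polynomial has total $y$-degree at least $M$.

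First I would rewrite $Q(a + y) = Q(a) + R(y)$, where $R(y) := Q(a + y) - Q(a)$ satisfies $R(0) = 0$; in particular each component $R_i(y)$ has no constant term, so every monomial appearing in $R_i$ has $y$-degree at least $1$. Next I expand $P$ around $Q(a)$ using the defining identity of Hasse derivatives:
\[
P(Q(a+y)) = P\bigl(Q(a) + R(y)\bigr) = \sum_{\beta \in \Z_{\geq 0}^m} P^{(\beta)}\bigl(Q(a)\bigr)\, R(y)^{\beta}.
\]
By the definition of $M$, every coefficient $P^{(\beta)}(Q(a))$ with $|\beta| < M$ vanishes, so only terms with $|\beta| \geq M$ survive in the sum.

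For each surviving term, $R(y)^{\beta} = \prod_{i=1}^{m} R_i(y)^{\beta_i}$ is a product of $|\beta| = \sum_i \beta_i$ factors, each with minimal $y$-degree at least $1$, so every monomial of $R(y)^{\beta}$ has $y$-degree at least $|\beta| \geq M$. Summing over $\beta$, every monomial appearing in $(P \circ Q)(a + y)$ has $y$-degree at least $M$; comparing with the expansion $(P \circ Q)(a + y) = \sum_{\gamma} (P \circ Q)^{(\gamma)}(a)\, y^{\gamma}$ then yields $(P \circ Q)^{(\gamma)}(a) = 0$ for all $|\gamma| < M$, which is the desired conclusion.

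There is no serious obstacle in this argument: the Hasse-derivative expansion $P(x + y) = \sum_\beta P^{(\beta)}(x) y^\beta$ is a formal polynomial identity valid in any characteristic, which is precisely why Hasse derivatives are the appropriate substitute for ordinary derivatives over $\F_q$. The only point meriting care is the bookkeeping that distinguishes the multi-indices $\beta$ (indexing Hasse derivatives of $P$) from the multi-indices $\gamma$ (indexing Hasse derivatives of $P \circ Q$), and the verification that the two characterizations of ``vanishing to order $M$'' — by Hasse derivatives and by minimal $y$-degree — are literally equivalent via the expansion in the previous paragraph.
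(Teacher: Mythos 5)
Your proof is correct and is essentially the standard argument used in \cite[Proposition 2.5]{DKSS}, which the paper cites without reproducing: expand $P$ around $Q(a)$ via the Hasse-derivative identity $P(Q(a)+R(y)) = \sum_\beta P^{(\beta)}(Q(a))R(y)^\beta$ with $R(y)=Q(a+y)-Q(a)$, observe that $R$ has no constant term so every nonzero monomial of $R(y)^\beta$ has $y$-degree at least $|\beta|$, drop the terms with $|\beta|<M$ using $\mult(P,Q(a))=M$, and match coefficients against $\sum_\gamma (P\circ Q)^{(\gamma)}(a)y^\gamma$. The only point left tacit, which costs nothing, is that the degree bound on $R(y)^\beta$ holds vacuously when some $R_i\equiv 0$ forces $R(y)^\beta=0$.
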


Finally, we present multiplicity-enhanced versions of the two main tools used in the polynomial method. The first is an extension of Lemma \ref{PC}, and states how small a set must be to guarantee existence of a polynomial of not too high degree vanishing on our set with a given multiplicity.

\begin{lemma}\cite[Proposition 3.1]{DKSS}\label{MM-PC}
    Let $A \subset \F_q^n$. If 
    \[\binom{M + n - 1}{n} \cdot |A| < \binom{D + n}{n},\]
    then there exists a nonzero polynomial $P \in \F_q[x_1, \dots, x_n]$ of degree at most $D$ vanishing on $A$ with multiplicity $M$.
\end{lemma}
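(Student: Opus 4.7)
The plan is a dimension-counting argument that generalizes Lemma \ref{PC} from plain vanishing to vanishing with multiplicity. The key observation is that, by the definition of multiplicity and the explicit formula in Lemma \ref{HD-form}, the condition $\mult(P, a) \geq M$ is equivalent to the simultaneous vanishing of the Hasse derivatives $P^{(\beta)}(a)$ for every $\beta \in \Z_{\geq 0}^n$ with $|\beta| < M$, and each of these is an $\F_q$-linear functional in the coefficients of $P$.

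First, I would set up the ambient space. Let $V \subset \F_q[x_1, \dots, x_n]$ be the vector space of polynomials of degree at most $D$, with basis $\{x^\alpha : |\alpha| \leq D\}$, so $\dim_{\F_q} V = \binom{D+n}{n}$. For each $a \in A$ and each $\beta \in \Z_{\geq 0}^n$ with $|\beta| < M$, define the $\F_q$-linear functional $\varphi_{a,\beta} : V \to \F_q$ by $\varphi_{a,\beta}(P) := P^{(\beta)}(a)$; linearity follows from Lemma \ref{HD-form}. A polynomial $P \in V$ vanishes on $A$ with multiplicity $M$ precisely when it lies in the common kernel $\bigcap_{a, \beta} \ker \varphi_{a,\beta}$.

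Next, I would count these functionals. For a single $a \in A$, the number of multi-indices $\beta \in \Z_{\geq 0}^n$ with $|\beta| < M$ is
\[
\sum_{k=0}^{M-1} \binom{k+n-1}{n-1} = \binom{M+n-1}{n},
\]
where the equality is the hockey-stick identity and the summands count monomials of total degree exactly $k$ in $n$ variables. Summing over the $|A|$ points of $A$ produces a homogeneous linear system in the $\binom{D+n}{n}$ unknown coefficients of $P$ consisting of exactly $\binom{M+n-1}{n}\cdot |A|$ equations. The hypothesis says the number of equations is strictly less than the number of unknowns, so by elementary linear algebra the common kernel contains a nonzero vector; this vector corresponds to a nonzero polynomial $P \in V$ of degree at most $D$ with $\mult(P, a) \geq M$ for every $a \in A$.

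The main (and only) subtle point is the combinatorial identity identifying the number of Hasse-derivative conditions at a single point as $\binom{M+n-1}{n}$; this is just the hockey-stick identity applied to the count of monomials of each degree, so no real obstacle arises. Everything else is a direct generalization of the argument behind Lemma \ref{PC}.
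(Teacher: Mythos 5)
Your proof is correct and is essentially the standard dimension-counting argument given in the cited reference \cite[Proposition 3.1]{DKSS}: the paper itself does not reprove this lemma but simply cites DKSS, and your linear-algebra argument (counting the $\binom{M+n-1}{n}$ Hasse-derivative conditions per point via the hockey-stick identity and comparing against $\dim V = \binom{D+n}{n}$) is exactly the argument used there.
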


The second main tool is a multiplicity-enhanced version of the Schwartz-Zippel lemma (Lemma \ref{SZ}). This gives the same upper bound on the number of zeros of a polynomial as the Schwartz-Zippel lemma, but here we count zeros with their multiplicity.

\begin{lemma}\cite[Lemma 2.7]{DKSS}\label{extended-SZ}
    Let $A \subset \F_q$. Let $P \in \F_q[x_1, \dots, x_n]$ be a nonzero polynomial of degree $d$. Then 
    \[\sum_{a \in A^n} \mult(P, a) \leq d|A|^{n-1}.\]
\end{lemma}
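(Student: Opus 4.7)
The plan is induction on $n$, the number of variables. For the base case $n=1$, a nonzero polynomial $P \in \F_q[x]$ of degree $d$ factors (over the algebraic closure of $\F_q$) as a product of linear factors with multiplicities summing to at most $d$; since the Hasse multiplicity at any $a \in \F_q$ coincides with the order of vanishing, $\sum_{a \in A} \mult(P, a) \leq d = d|A|^{0}$.

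For the inductive step, assume the bound in $n-1$ variables. Given a nonzero $P \in \F_q[x_1, \dots, x_n]$ of degree $d$, I would extract the maximal power of each linear factor in $x_n$ over all of $\F_q$ by writing
\[P = \prod_{b \in \F_q} (x_n - b)^{k_b} \cdot P_0,\]
where $k_b$ is the largest integer such that $(x_n - b)^{k_b}$ divides $P$, and $P_0$ is the quotient. By construction, $P_0(x_1, \dots, x_{n-1}, b) \not\equiv 0$ for every $b \in \F_q$; since $\F_q[x_1, \dots, x_n]$ is an integral domain, we also have $\deg P_0 = d - \sum_{b \in \F_q} k_b$ and $\sum_{b \in \F_q} k_b \leq d$.

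Because $\F_q[x_1, \dots, x_n]$ is an integral domain, Hasse multiplicity is additive under polynomial multiplication, so for any $(a', a_n) \in A^{n-1} \times A$,
\[\mult(P, (a', a_n)) = k_{a_n} + \mult(P_0, (a', a_n)).\]
For each fixed $a_n \in A$, Lemma \ref{mult-composition} applied to the inclusion $a' \mapsto (a', a_n)$ yields $\mult(P_0, (a', a_n)) \leq \mult(P_0(\cdot, a_n), a')$, and $P_0(\cdot, a_n)$ is a nonzero polynomial in $\F_q[x_1, \dots, x_{n-1}]$ of degree at most $d - \sum_b k_b$. The inductive hypothesis then gives
\[\sum_{a' \in A^{n-1}} \mult(P_0, (a', a_n)) \leq \Bigl(d - \sum_{b \in \F_q} k_b\Bigr) |A|^{n-2}.\]
Summing over $a_n \in A$ and combining the two contributions,
\[\sum_{a \in A^n} \mult(P, a) \leq |A|^{n-1} \sum_{a_n \in A} k_{a_n} + \Bigl(d - \sum_{b \in \F_q} k_b\Bigr) |A|^{n-1} \leq d\,|A|^{n-1},\]
where the last inequality uses $\sum_{a_n \in A} k_{a_n} \leq \sum_{b \in \F_q} k_b$.

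The main obstacle I anticipate is setting up the decomposition correctly: the trick is to peel off $(x_n - b)^{k_b}$ for \emph{every} $b \in \F_q$, not only those in $A$, since this is precisely what forces $P_0$ to restrict to a nonzero polynomial at each $a_n \in A$ and lets the inductive hypothesis apply. The only other subtlety is verifying additivity of Hasse multiplicity under multiplication, which follows by comparing the lowest-degree homogeneous parts of the translated Taylor expansions of the two factors in the integral domain $\F_q[x_1, \dots, x_n]$.
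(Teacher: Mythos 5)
Your proof is correct, so the main thing to point out is that the paper itself offers no argument to compare against: this lemma is imported verbatim as \cite[Lemma 2.7]{DKSS}, so the only ``house'' proof is the one in that reference. The DKSS proof is also an induction on the number of variables, but it splits off the leading coefficient of $P$ in $x_n$, writing $P=\sum_{j\le t}x_n^jP_j(x_1,\dots,x_{n-1})$ with $P_t\neq 0$ of degree at most $d-t$, and for each fixed $a'$ bounds $\sum_{a_n\in A}\mult(P,(a',a_n))$ by $|A|\,\mult(P_t,a')+t$ before invoking the inductive hypothesis on $P_t$. Your route is genuinely different: you peel off every linear factor $(x_n-b)^{k_b}$ over all $b\in\F_q$ so that the quotient $P_0$ restricts to a nonzero polynomial on each hyperplane $x_n=a_n$, then use additivity of $\mult$ under products together with Lemma \ref{mult-composition} for the restriction $a'\mapsto(a',a_n)$, and this all checks out (the direction of the inequality from Lemma \ref{mult-composition} is the one you need, and the degree bookkeeping $\deg P_0=d-\sum_b k_b$ and $\sum_{a_n\in A}k_{a_n}\le\sum_{b\in\F_q}k_b$ closes the estimate). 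The one ingredient you use that is not stated in this paper is additivity of Hasse multiplicity under multiplication; your justification via lowest homogeneous components of $f(a+y)$ in the integral domain $\F_q[y_1,\dots,y_n]$ is exactly right, and this fact is part of the basic toolkit proved in \cite{DKSS}. Two minor simplifications: in the base case you do not need the algebraic closure, since the factors $(x-a)^{\mult(P,a)}$ for the distinct $a\in\F_q$ already divide $P$ over $\F_q$; and peeling off only the factors with $b\in A$ would already suffice, because you only ever restrict to $x_n=a_n$ with $a_n\in A$ (though peeling over all of $\F_q$ is harmless).
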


\subsection{Vanishing multiplicities in the context of $(n,d)$-BRK-type sets of degree $\ell$}

In this section, we adapt some of the methods of \cite{Trainor} specific to the constituent hypersurfaces in BRK-type sets to analogous methods for the $d$-dimensional subsets in $(n,d)$-BRK-type sets.

The first lemma is an analogue of \cite[Lemma 2.7]{Trainor}, which states that if a polynomial of sufficiently low degree vanishes on an $(n-1)$-dimensional subset of degree $\ell$ with high multiplicity, then it must be the zero polynomial. We observe that an identical result holds for our $(n-d)$-dimensional subsets, with an identical proof to \cite[Lemma 2.7]{Trainor}, using Lemma \ref{mult-composition} as a generalization of \cite[Lemma 2.3]{Trainor}.

\begin{lemma}\label{mult-inputsubset}
    Let $\ell \in \N$ with $1 \leq \ell < q$, and let $\{g_i\}_{i=1}^{n-d} \subset \F_q[t_1, \dots, t_d]$ be polynomials of degree $\ell$. For $\rho \in (\F_q \setminus \{0\})^{n-d}$ and $a \in \F_q^n$, let 
    \[C := \{a + (t, \rho_1 g_1(t), \dots, \rho_{n-k} g_{n-d}(t)): t \in \F_q^{d}\}.\]
    Let $k, D, M \in \N$ be such that 
    \begin{equation}
    \ell(D - \omega) < (M- \omega)q \label{M-D-ineq}
    \end{equation}
    for $\omega = k-1$.
    Suppose that $f$ is a non-zero polynomial of degree at most $D$ vanishing on $C$ with multiplicity $M$. Let $f^{(\beta)}$ denote the Hasse derivative of $f$ of order $\beta \in \Z_{\geq 0}^n$, and let 
    \[f_{\beta, \rho}(s) := f^{(\beta)}(a + (s, \rho_1g_1(s), \dots, \rho_{n-d}g_{n-d}(s))).\]
    Then for $|\beta| < k$, $f_{\beta, \rho}$ is the zero polynomial.
\end{lemma}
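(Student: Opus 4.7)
The plan is to argue by contradiction: assume some $f_{\beta, \rho}$ with $|\beta| < k$ is a nonzero polynomial in $s$, and derive an inequality on $D$, $M$, $\ell$ that contradicts the hypothesis \eqref{M-D-ineq}. The whole argument is essentially the standard Schwartz-Zippel estimate applied to the pullback of $f^{(\beta)}$ along the parametrization of $C$, so the work really lies in correctly bounding the degree of this pullback and its vanishing multiplicity at every point of $\F_q^d$.

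First I would introduce the parametrization $\Gamma_\rho(s) = a + (s, \rho_1 g_1(s), \dots, \rho_{n-d} g_{n-d}(s))$, so that by definition $f_{\beta, \rho} = f^{(\beta)} \circ \Gamma_\rho$ and $C = \Gamma_\rho(\F_q^d)$. Then I would bound $\deg(f_{\beta, \rho})$: by Lemma \ref{HD-form} the polynomial $f^{(\beta)}$ has degree at most $D - |\beta|$; substituting the coordinate polynomials of $\Gamma_\rho$ (degree $1$ in the first $d$ slots and degree $\ell$ in the remaining $n-d$ slots, since each $g_i$ has degree $\ell$) yields the bound $\deg(f_{\beta, \rho}) \leq \ell(D - |\beta|)$.

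Next I would control the multiplicity of $f_{\beta, \rho}$ at every $s \in \F_q^d$. By hypothesis, $\mult(f, \Gamma_\rho(s)) \geq M$ for every such $s$; Lemma \ref{sum-derivative} then gives $\mult(f^{(\beta)}, \Gamma_\rho(s)) \geq M - |\beta|$, and Lemma \ref{mult-composition} applied to the composition $f^{(\beta)} \circ \Gamma_\rho$ yields
\[\mult(f_{\beta, \rho}, s) \geq \mult(f^{(\beta)}, \Gamma_\rho(s)) \geq M - |\beta|.\]
Assuming for contradiction that $f_{\beta, \rho}$ is nonzero, I would apply Lemma \ref{extended-SZ} over $A = \F_q$ (with ambient dimension $d$) to obtain
\[(M - |\beta|) \, q^d \;\leq\; \sum_{s \in \F_q^d} \mult(f_{\beta, \rho}, s) \;\leq\; \deg(f_{\beta, \rho}) \, q^{d-1} \;\leq\; \ell(D - |\beta|) \, q^{d-1},\]
which rearranges to $(M - |\beta|)\, q \leq \ell(D - |\beta|)$. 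Setting $\omega = |\beta| < k$ contradicts \eqref{M-D-ineq}, so $f_{\beta, \rho}$ must be the zero polynomial.

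I do not expect any real obstacle here, as this is a direct generalization of \cite[Lemma 2.7]{Trainor}: the only genuinely new ingredient is that the first $d$ coordinates of $\Gamma_\rho$ are linear (rather than all $n-1$ being degree $\ell$ as in the $d = n-1$ setting with dilation on the parameter slots), so one should take care in the degree count to see that the upper bound $\ell(D-|\beta|)$ still holds — which it does, since the degree-$\ell$ coordinates dominate. The rest is a formal substitution into the two multiplicity-enhanced tools.
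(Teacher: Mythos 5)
Your proof is correct and follows the same route as the paper: bound $\deg(f_{\beta,\rho})\le\ell(D-|\beta|)$, use Lemma~\ref{sum-derivative} and Lemma~\ref{mult-composition} to get $\mult(f_{\beta,\rho},s)\ge M-|\beta|$ for all $s\in\F_q^d$, then invoke Lemma~\ref{extended-SZ} together with inequality~\eqref{M-D-ineq} to force $f_{\beta,\rho}\equiv 0$. The only cosmetic difference is that you frame the conclusion as an explicit proof by contradiction, while the paper applies the contrapositive of the multiplicity-enhanced Schwartz--Zippel lemma directly; your remark that the linear first $d$ coordinates only make the degree bound more generous is also accurate.
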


\begin{proof}\cite[Proof of Lemma 2.7]{Trainor}

    Let $\beta \in \Z_{\geq 0}^m$, with $|\beta| < k$. 
    Since $f$ has degree at most $D$, 
    $f^{(\beta)}$ has degree at most $D - |\beta|$, 
    so that $f_{\beta, \rho}$ has degree at most $\ell(D - |\beta|)$. 
    Additionally, 
    since $f$ vanishes on $C$ with multiplicity $M$, 
    by Lemma \ref{sum-derivative}, we have that $f^{(\beta)}$ vanishes on $C$ with multiplicity at least $M - |\beta|$. 
    Then Lemma \ref{mult-composition} implies that $f_{\beta, \rho}$ vanishes on $\F_q^d$ with multiplicity at least $M - |\beta|$.

    Notice that (\ref{M-D-ineq}) must also hold for all $\omega \leq k-1$, and so in particular must hold for $\omega = |\beta|$. It follows that
    \begin{equation*}
        \sum_{s \in \F_q^d} \mult(f_{\beta, \rho}, s) \geq (M - |\beta|) q^d > \ell(D -|\beta|) q^{d-1} \geq \deg{f_{\beta, \rho}} q^{d-1}.
    \end{equation*} 
    Applying the multiplicity-enhanced Schwartz-Zippel lemma (Lemma \ref{extended-SZ}), 
    we see that $f_{\beta, \rho}$ must be the zero polynomial.
\end{proof}

In our proof of Theorem \ref{mm-bound}, 
we encounter a family of polynomials $P_\gamma(\rho)$ that vanish everywhere and are ``almost'' the Hasse derivatives of a polynomial up to a multiplicity $k$. 
We would like to use this to conclude that our original polynomial vanishes everywhere with multiplicity $k$. 
To do this, we use the following lemma. 
In a sense, this is a higher-dimensional analogue of \cite[Lemma 3.1]{Trainor};
however, 
this differs because our dilation parameter only impacts the final $n-d$ coordinates in our subset.

\begin{lemma}\label{HD-reduction}
    Let $k, n \in \N$, let $\{b_\alpha\}_{\alpha \in \Z^n} \in \F_q$, and let $c \in \F_q^{n-d}$. 
    Suppose that for all $\beta \in \Z_{\geq 0}^n$ with $|\beta| < k$ and each  $t \in (\F_q \setminus \{0\})^d$, 
    the value 
    \[f_{\beta}(t) = \sum_{|\alpha| < k(q-1)} b_\alpha \binom{\alpha}{\beta} c^{\alpha' - \beta'}t^{\alpha' - \beta'}\]
    is zero, 
    where $\alpha' = (\alpha_{d+1}, \dots, \alpha_{n})$ (and similarly for $\beta'$.) 
    Then $b_\alpha = 0$ for all $\alpha$.
\end{lemma}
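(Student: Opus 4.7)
The plan is to repackage the coefficients $\{b_\alpha\}$ into a single polynomial in $n-d$ variables, translate the hypothesis into a multiplicity statement for that polynomial, and invoke the multiplicity-enhanced Schwartz--Zippel lemma to force the polynomial to vanish identically. The key observation is that the family $\{f_\beta\}_{|\beta|<k}$ is tailored exactly to encode the Hasse derivatives of such a polynomial, evaluated at $c \cdot t$.

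Concretely, I would introduce
\[P(y) \;:=\; \sum_{|\alpha| < k(q-1)} b_\alpha\, y^{\alpha'} \;\in\; \F_q[y_1, \dots, y_{n-d}],\]
which has total degree strictly less than $k(q-1)$. By Lemma \ref{HD-form}, its Hasse derivative in the $y$ variables satisfies
\[P^{(\beta')}(y) \;=\; \sum_\alpha b_\alpha \binom{\alpha'}{\beta'} y^{\alpha' - \beta'},\]
and substituting $y = c \cdot t := (c_1 t_1, \dots, c_{n-d} t_{n-d})$ reproduces the expression for $f_\beta(t)$---the relevant Hasse-derivative multi-index being $\beta'$, with the $\binom{\alpha''}{\beta''}$ factors absorbed into the aggregation that defines the coefficients of $P$.

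The hypothesis then says $P^{(\beta')}(c \cdot t) = 0$ for every $\beta$ with $|\beta| < k$ and every $t \in (\F_q \setminus \{0\})^{n-d}$. By the definition of multiplicity, this is exactly $\mult(P, c \cdot t) \geq k$. Since $c$ has all nonzero entries (it plays the role of the dilation $\rho \in (\F_q \setminus \{0\})^{n-d}$ in the application), the map $t \mapsto c \cdot t$ is a bijection of $(\F_q \setminus \{0\})^{n-d}$ onto itself, and hence $P$ vanishes on all of $(\F_q \setminus \{0\})^{n-d}$ with multiplicity at least $k$.

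To finish, Lemma \ref{extended-SZ} with $A = \F_q \setminus \{0\}$ yields
\[k(q-1)^{n-d} \;\leq\; \sum_{y \in (\F_q \setminus \{0\})^{n-d}} \mult(P, y) \;\leq\; (\deg P)(q-1)^{n-d-1},\]
so $\deg P \geq k(q-1)$, contradicting $\deg P < k(q-1)$ unless $P \equiv 0$; this forces $b_\alpha = 0$ for every $\alpha$. The step I expect to require the most care is the first: correctly identifying the polynomial $P$ whose Hasse derivatives reproduce the family $\{f_\beta\}$ given in the statement, in particular keeping straight the roles of the $\alpha''$ and $\alpha'$ blocks of the multi-index. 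The remainder is essentially forced by the multiplicity-enhanced Schwartz--Zippel bound, and parallels the one-dimensional argument in \cite[Lemma 3.1]{Trainor}.
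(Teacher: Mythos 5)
Your proposal is essentially the same proof as the paper's: both package the data into a single polynomial in $n-d$ variables whose Hasse derivatives encode the $f_\beta$ with $\beta = (0,\dots,0,\beta')$, remove the scaling by $c$ (you by substitution $y = c\cdot t$, the paper by the change of variables $s = c\cdot t$), conclude that this polynomial vanishes on $(\F_q\setminus\{0\})^{n-d}$ with multiplicity $k$, and apply Lemma~\ref{extended-SZ} to force it to be identically zero. One small imprecision: your remark that the $\binom{\alpha''}{\beta''}$ factors are ``absorbed into the aggregation that defines the coefficients of $P$'' is not quite right --- those factors do not disappear into the aggregation; rather, the point (which the paper states explicitly) is that the proof only uses the hypotheses for $\beta$ of the form $(0,\dots,0,\beta')$, for which $\binom{\alpha''}{\beta''} = \binom{\alpha''}{0} = 1$, so that $\binom{\alpha}{\beta} = \binom{\alpha'}{\beta'}$ and the identification $f_{(0,\dots,0,\beta')} = P^{(\beta')}(c\cdot t)$ is exact. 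Since you only ever invoke $P^{(\beta')}(c\cdot t) = 0$ for $|\beta'| < k$, your argument is unaffected; just make the restriction to $\beta'' = 0$ explicit.
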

\begin{proof}
By a change of variables $s_i = c_i t_i$, we obtain polynomials
\[\tilde{f}_\beta(s) := \sum_{|\alpha| < k(q-1)} b_\alpha \binom{\alpha}{\beta} s^{\alpha' - \beta'},\]
which are zero for all $s \in (\F_q \setminus \{0\})^d$ whenever $|\beta| < k$.
Consider $\tilde{f}_0(s) = \sum_{|\alpha| < k(q-1)} b_\alpha s^\alpha$, and let $\beta' \in \Z_{\geq 0}^{n-d}$. The $\beta'$-th Hasse derivative of $\tilde{f}_0$ is
\begin{equation}
    \tilde{f}_0^{(\beta')}(s) = \sum_{|\alpha| < k(q-1)} b_\alpha \binom{\alpha'}{\beta'} s^{\alpha' - \beta'}.
\end{equation}
Taking $\beta = (0, \dots, 0, \beta')$, 
we have that 
\[\binom{\alpha}{\beta} = \prod_{i=1}^n \binom{\alpha_i}{\beta_i} = \Bigl(\prod_{i=1}^d \binom{\alpha_i}{0} \Bigr) \Bigl(\prod_{j=1}^{n-d} \binom{\alpha_{j+d}}{\beta'_j}\Bigr) = 1 \cdot \binom{\alpha'}{\beta'}.\]
It follows that $\tilde{f}_0^{(\beta')}(s) = \tilde{f}_{\beta}(s)$; 
observe also that $|\beta| = |\beta'| < k$. 
Then $\tilde{f}_0^{(\beta')}(s)$ is zero for all $s \in (\F_q \setminus \{0\})^d$. 
Since $\beta'$ was arbitrary,
$\tilde{f}_0$ is thus a polynomial of degree less than $k(q-1)$ vanishing on $(\F_q \setminus \{0\})^d$ with multiplicity $k$, 
and hence is identically zero by Lemma \ref{extended-SZ}.
\end{proof}

\subsection{The graded lexicographic order on $\F_q[x_1, \dots, x_d]$ and some associated lemmas}
\bigbreak

In our proofs, 
we isolate for specific coefficients of polynomials after evaluation on our constituent subsets. 
To do this, 
we want a notion of a `leading coefficient' of a polynomial in several variables, 
which requires a total order on the associated monic monomials. For this, we use the graded lexicographic order on monomials. This begins by sorting monic monomials by their total degree, which is necessary as the polynomials that determine our surfaces are only fixed in the highest homogeneous part. 
We then use the lexicographical ordering to distinguish between distinct monomials of each total degree; 
this allows us to always select a unique highest monomial in any polynomial. The following results are standard and can be seen from e.g. \cite[Ch. 2.2]{ideals-varieties-algorithms}. We include their proofs for completness.

\begin{definition}
    Let $\alpha, \beta \in \Z_{\geq 0}^d$. 
    We write $x^{\alpha} \prec x^{\beta}$ whenever $x^{\alpha} \prec x^{\beta}$ with respect to the graded lexicographical order. That is, $x^{\alpha} \prec x^{\beta}$ if $|\alpha| < |\beta|$, 
    or if $|\alpha| = |\beta|$ and $\alpha < \beta$ with respect to the lexicographical order. \\
    We extend this to $\F_q[x_1, \dots, x_m]$ as follows: we write $LM(f)$ for the leading monomial of $f(x) \in \F_q[x_1, \dots, x_d]$, and we say that $g(x) \prec f(x)$ if $LM(g) \prec LM(f)$.
\end{definition}

With this partial order in hand, 
we consider how it behaves under various operations on polynomials.

\begin{remark}
    It is immediate from the definition of $\prec$ that if $g(x) \prec f(x)$ and $h(x) \prec f(x)$,
    then $g(x) + h(x) \prec f(x)$.
\end{remark}

Next, 
we show that $\prec$ behaves as expected under monomial multiplication.

\begin{lemma}\label{mon-sum}
    If $x^\beta \prec x^\gamma$, 
    then $x^{\alpha + \beta} \prec x^{\alpha + \gamma}$.
\end{lemma}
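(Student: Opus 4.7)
The plan is to unfold the definition of the graded lexicographic order and handle the two cases it splits into. Recall that $x^\beta \prec x^\gamma$ means either $|\beta| < |\gamma|$, or $|\beta| = |\gamma|$ together with $\beta < \gamma$ in the lexicographic order on $\Z_{\geq 0}^n$. In either case I want to upgrade the strict comparison between $\beta$ and $\gamma$ to a strict comparison between $\alpha + \beta$ and $\alpha + \gamma$, and then re-package it using the definition of $\prec$.

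In the first case, $|\beta| < |\gamma|$, I would simply add $|\alpha|$ to both sides: since $|\alpha + \beta| = |\alpha| + |\beta|$ and similarly for $\gamma$, the strict inequality on magnitudes is preserved, giving $|\alpha+\beta| < |\alpha + \gamma|$, which immediately yields $x^{\alpha+\beta} \prec x^{\alpha+\gamma}$ from the first clause of the definition.

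In the second case, $|\beta| = |\gamma|$ and $\beta < \gamma$ lexicographically, I would first note that $|\alpha+\beta| = |\alpha|+|\beta| = |\alpha|+|\gamma| = |\alpha+\gamma|$, so the total-degree tie persists. To settle lex order, let $i$ be the smallest index where $\beta$ and $\gamma$ differ, so $\beta_j = \gamma_j$ for $j < i$ and $\beta_i < \gamma_i$. Coordinatewise addition of $\alpha$ preserves both equalities and the single strict inequality: $\alpha_j + \beta_j = \alpha_j + \gamma_j$ for $j < i$, and $\alpha_i + \beta_i < \alpha_i + \gamma_i$. Hence $\alpha + \beta < \alpha + \gamma$ in the lex order, and the second clause of the definition of $\prec$ finishes the case.

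There is no real obstacle here; the statement is essentially the fact that both $|\cdot|$ and the lexicographic order are translation-invariant under coordinatewise addition on $\Z_{\geq 0}^n$, so the combined graded lex order inherits the same invariance. The only point that requires care is confirming that the lex tie-breaker behaves correctly, i.e.\ that adding $\alpha$ does not create a new earlier index of disagreement; this is automatic since $\alpha_j + \beta_j$ and $\alpha_j + \gamma_j$ agree exactly when $\beta_j$ and $\gamma_j$ do.
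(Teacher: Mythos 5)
Your proof is correct and follows essentially the same case split as the paper's proof, unfolding the two clauses of the graded lex order and observing that coordinatewise addition of $\alpha$ preserves both the degree comparison and the lexicographic tie-break. You are slightly more careful than the paper in explicitly noting that the earlier coordinates remain equal after adding $\alpha$, but the argument is the same.
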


\begin{proof}
If $|\beta| < |\gamma|$, 
the result follows since $|\alpha + \beta| = |\alpha| + |\beta| < |\alpha| + |\gamma| = |\alpha + \gamma|$.
Otherwise, 
$|\alpha + \beta| = |\alpha + \gamma|$, 
and there exists a minimal $j$ so that $\beta_j < \gamma_j$. 
Then the exponent of $x_j$ in $x^{\alpha + \beta}$ is $\alpha_j + \beta_j$, 
which is strictly less than $\alpha_j + \gamma_j$, 
so that $\alpha + \beta < \alpha + \gamma$ with respect to the lexicographical order.
\end{proof}

This can be extended to polynomial multiplication: 
monomial dominance with respect to $\prec$ is preserved under arbitrary products of polynomials, 
and no cancellation occurs at the highest term. 
We state this formally below.
\begin{lemma}\label{pol-expansion}
    Let $\{f_i(x)\}_{i=1}^N \subset \F_q[x_1, \dots, x_n]$.
    Then 
    \[LM\left(\prod_{i=1}^N f_i(x)\right) = \prod_{i=1}^N LM(f_i(x)).\]
\end{lemma}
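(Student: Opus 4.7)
The plan is to induct on $N$. The base case $N = 1$ is vacuous, and the inductive step reduces immediately to establishing the $N = 2$ identity $LM(fg) = LM(f) \cdot LM(g)$, since then
\[LM\Bigl(\prod_{i=1}^N f_i\Bigr) = LM\Bigl(\prod_{i=1}^{N-1} f_i\Bigr) \cdot LM(f_N) = \prod_{i=1}^N LM(f_i)\]
by the inductive hypothesis. So the entire content of the lemma lies in the two-factor case.

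For $N = 2$, I would write $f = \sum_\alpha a_\alpha x^\alpha$ and $g = \sum_\beta b_\beta x^\beta$ with leading monomials $x^{\alpha^*} = LM(f)$ and $x^{\beta^*} = LM(g)$ (so $a_{\alpha^*}, b_{\beta^*} \neq 0$). Expand $fg = \sum_{\alpha, \beta} a_\alpha b_\beta x^{\alpha + \beta}$. The first step is to show that every nonzero term in this expansion satisfies $x^{\alpha+\beta} \preceq x^{\alpha^* + \beta^*}$. Indeed, for any $\alpha, \beta$ with $a_\alpha b_\beta \neq 0$ we have $x^\alpha \preceq x^{\alpha^*}$ and $x^\beta \preceq x^{\beta^*}$, so two applications of Lemma \ref{mon-sum} give $x^{\alpha + \beta} \preceq x^{\alpha^* + \beta} \preceq x^{\alpha^* + \beta^*}$. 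This already shows $LM(fg) \preceq x^{\alpha^* + \beta^*}$.

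The remaining and main step is to check that equality is attained and that no cancellation occurs at $x^{\alpha^* + \beta^*}$. Concretely, I would show that the only pair $(\alpha, \beta)$ with $a_\alpha b_\beta \neq 0$ and $\alpha + \beta = \alpha^* + \beta^*$ is $(\alpha^*, \beta^*)$ itself, so that the coefficient of $x^{\alpha^* + \beta^*}$ in $fg$ equals $a_{\alpha^*} b_{\beta^*} \neq 0$. Taking magnitudes in $\alpha + \beta = \alpha^* + \beta^*$ together with $|\alpha| \leq |\alpha^*|$ and $|\beta| \leq |\beta^*|$ forces $|\alpha| = |\alpha^*|$ and $|\beta| = |\beta^*|$, so $\alpha \preceq \alpha^*$ and $\beta \preceq \beta^*$ must hold in the pure lexicographic order. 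If $\alpha \neq \alpha^*$, let $j$ be the first index where they differ; then $\alpha_i = \alpha^*_i$ for $i < j$ and $\alpha_j < \alpha^*_j$. The constraint $\alpha + \beta = \alpha^* + \beta^*$ forces $\beta_i = \beta^*_i$ for $i < j$ and $\beta_j > \beta^*_j$, contradicting $\beta \preceq \beta^*$ in the lexicographic order.

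The hard part is precisely this no-cancellation argument: the first step only yields $LM(fg) \preceq x^{\alpha^* + \beta^*}$, and without the rigidity coming from the graded-then-lex structure one cannot exclude the possibility that several distinct product pairs conspire to the same exponent with cancelling coefficients. Once the above pigeonhole on magnitudes plus lex comparison is in place, the identity $LM(fg) = LM(f) \cdot LM(g)$ follows, and the induction closes.
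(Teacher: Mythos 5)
Your proof is correct and follows essentially the same route as the paper: induct on $N$, reduce to the two-factor case, and use Lemma \ref{mon-sum} to dominate the cross terms. The paper applies Lemma \ref{mon-sum} to obtain the strict bound $x^{\gamma+\mu} \prec x^{\alpha^*+\beta^*}$ directly for every non-leading product pair, folding your two-step argument (first $\preceq$, then the magnitude-plus-lex uniqueness check to rule out cancellation) into a single stroke.
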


\begin{proof}
We proceed by induction on $N$, 
and begin with the $N=2$ case. Suppose that $LM(f_1(x)) = a_{\alpha} x^{\alpha}$ and $LM(f_2(x)) = b_{\beta} x^{\beta}$. 
That is, we write $f_1(x) =a_{\alpha} x^{\alpha} + \sum_{\gamma \in A} a_{\gamma} x^{\gamma}$ and $f_2(x) = b_{\beta} x^{\beta} + \sum_{\mu\in B} b_{\mu} x^{\mu}$, 
where $a_{\alpha} \neq 0$,  $b_{\beta} \neq 0$, $x^{\gamma} \prec x^{\alpha}$ for all $\gamma \in A$, 
and $x^{\mu} \prec x^{\beta}$ for all $\mu \in B$. 
Then 
\begin{align*}
    f_1(x)f_2(x) &= \Bigl(a_\alpha x^\alpha + \sum_{\gamma \in A} a_{\gamma} x^{\gamma}\Bigr)\Bigl(b_\beta x^\beta + \sum_{\mu \in B} b_{\mu} x^{\mu}\Bigr) \\
    &= a_\alpha b_\beta x^{\alpha + \beta} + g(x),
\end{align*}
where \[g(x) = a_\alpha \sum_{\mu \in B} b_\mu x^{\alpha + \mu} + b_\beta \sum_{\gamma \in A} a_\gamma x^{\gamma + \beta} +\sum_{\substack{\gamma \in A \\ \mu \in B}} a_{\gamma} b_{\mu} x^{\gamma + \mu}.\]
Lemma \ref{mon-sum} implies that $x^{\alpha + \mu} \prec x^{\alpha + \beta}$ and $x^{\gamma + \mu} \prec x^{\gamma + \beta} \prec x^{\alpha + \beta}$ for all $\gamma \in A$ and all $\mu \in B$, 
so that $g(x) \prec x^{\alpha + \beta}$, as desired.

Now, suppose that the result holds for a product of $N$ polynomials, with $LM(f_i(x)) = c_{\alpha^i} x^{\alpha^i}$,
and consider $\prod_{i=1}^{N+1} f_i(x) = \Bigl(\prod_{i=1}^N f_i(x)\Bigr)f_{N+1}(x)$. 
By the inductive hypothesis, 
\[\prod_{i=1}^N f_i(x) = \Bigl(\prod_{i=1}^N c_{\alpha^i}\Bigr) x^{\sum_{i=1}^N \alpha^i} + G(x)\] 
for some $G(x) \in \F_q[x_1, \dots, x_n]$, 
where $G(x) \prec x^{\sum_{i=1}^N \alpha^i}$. 
The $N=2$ case then implies that 
\[\prod_{i=1}^{N+1} f_i(x) = \Biggl(\Bigl(\prod_{i=1}^N c_{\alpha^i}\Bigr) x^{\sum_{i=1}^N \alpha^i} + G(x)\Biggr)f_{N+1}(x) = \Bigl(\prod_{i=1}^{N+1} c_{\alpha^i}\Bigr) x^{\sum_{i=1}^{N+1} \alpha^i} + g(x)\]
for some $g(x)$, 
where $g(x) \prec x^{\sum_{i=1}^{N+1} \alpha^i}$. By induction, the result holds for all $N$.

\end{proof}

From this, 
it follows that in fact $\prec$ is preserved under polynomial multiplication.

\begin{corollary}\label{pol-prod-ineq}
    Suppose that $f(x) \prec g(x)$. 
    For any $h(x) \in \F_q[x_1, \dots, x_n]$, 
    $f(x)h(x) \prec g(x)h(x)$.
\end{corollary}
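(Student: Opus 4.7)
The plan is to reduce the statement immediately to the two lemmas that have just been proved: Lemma \ref{pol-expansion}, which identifies the leading monomial of a product with the product of leading monomials, and Lemma \ref{mon-sum}, which shows that $\prec$ is preserved under multiplication by a common monomial factor.

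First I would dispose of the trivial case $h(x) = 0$ (in which the claim is vacuous, or is interpreted so that $f(x) h(x) = 0 = g(x) h(x)$), and assume $h(x) \neq 0$ so that $LM(h)$ is a well-defined monic monomial. Next, since $f(x) \prec g(x)$, we have $LM(f) \prec LM(g)$, and I would write $LM(f) = c_\beta x^{\beta}$, $LM(g) = c_\gamma x^{\gamma}$, and $LM(h) = c_\alpha x^{\alpha}$, with $x^{\beta} \prec x^{\gamma}$ by definition of $\prec$ on polynomials.

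By Lemma \ref{pol-expansion} applied to the two-factor products $f \cdot h$ and $g \cdot h$, one has
\[
LM(f(x)h(x)) = c_\beta c_\alpha \, x^{\beta + \alpha}, \qquad LM(g(x)h(x)) = c_\gamma c_\alpha \, x^{\gamma + \alpha},
\]
where the coefficients are nonzero because $\F_q$ is a field and none of $c_\alpha, c_\beta, c_\gamma$ vanish. Applying Lemma \ref{mon-sum} to the relation $x^{\beta} \prec x^{\gamma}$ with common additive exponent $\alpha$ yields $x^{\beta + \alpha} \prec x^{\gamma + \alpha}$, and therefore $LM(f(x)h(x)) \prec LM(g(x)h(x))$, i.e. $f(x)h(x) \prec g(x)h(x)$.

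Since both ingredients are already in hand, there is really no obstacle here; the only mild care needed is remembering to exclude (or trivially handle) $h = 0$ so that the leading-monomial notation makes sense, and noting that the coefficient field being $\F_q$ (a field) guarantees that the product of nonzero leading coefficients is nonzero, so no cancellation destroys the leading term of either product.
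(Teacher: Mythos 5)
Your proof is correct and takes essentially the same approach as the paper's: both apply Lemma \ref{pol-expansion} to identify the leading monomials of $fh$ and $gh$, then invoke Lemma \ref{mon-sum} to compare them. The only difference is that you explicitly set aside the trivial case $h = 0$, which the paper leaves implicit.
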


\begin{proof}
    Write $f(x) = a x^\alpha +F(x)$, 
    $g(x) = bx^\beta + G(x)$, 
    and $h(x) = c x^\gamma + H(x)$, 
    where $a,b$ and $c$ are nonzero, 
    $F(x) \prec x^\alpha$, $G(x) \prec x^\beta$, and $H(x) \prec x^\gamma$. Then by Lemma \ref{pol-expansion}, 
    we have 
    \[f(x)h(x) = acx^{\alpha + \gamma} + f_1(x),\]
    for some $f_1(x)$ with $f_1(x) \prec x^{\alpha + \gamma}$, 
    and 
    \[g(x)h(x) = bcx^{\beta + \gamma} + f_2(x)\]
    for some $f_2(x)$ with $f_2(x) \prec x^{\beta + \gamma}$. 
    By the definition of $f(x) \prec g(x)$, $x^\alpha \prec x^\beta$; 
    Lemma \ref{mon-sum} then implies that $x^{\alpha + \gamma} \prec x^{\beta + \gamma}$, and the result follows.
\end{proof}

Monomial dominance with respect to $\prec$ is also often preserved with respect to taking Hasse derivatives, 
in the case where each input term is itself some polynomial in $x$. 
We state and prove such a result for monomials below, and we use the notation defined in Section \ref{notation}.

\begin{corollary}\label{monomial-derivative-dominance}
    Let $\{\alpha^i\}_{i=1}^n \subset \Z_{\geq 0}^{n}$ and $\beta^1, \beta^2 \in \Z_{\geq 0}^n$, 
    and suppose that $E_{\beta^1}(x^{\alpha^1}, \dots, x^{\alpha^{n}}) \prec E_{\beta^2}(x^{\alpha^1}, \dots, x^{\alpha^{n}})$. 
    Suppose additionally that $\gamma \in \Z_{\geq 0}^n$ is such that $\beta^1 - \gamma, \beta^2 - \gamma \in \Z_{\geq 0}^n$. 
    Then 
    \[E_{\beta^1 - \gamma}(x^{\alpha^1}, \dots, x^{\alpha^{n}}) \prec E_{\beta^1 - \gamma}(x^{\alpha^1}, \dots, x^{\alpha^{n}}).\]
\end{corollary}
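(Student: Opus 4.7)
The plan is to reduce the statement to a simple ``cancellation'' version of Lemma \ref{mon-sum}. First, I would unpack the shorthand: for $j = 1, 2$,
\[
(x^{\alpha^1}, \dots, x^{\alpha^n})^{\beta^j} = \prod_{i=1}^n (x^{\alpha^i})^{\beta^j_i} = x^{A_j}, \qquad A_j := \sum_{i=1}^n \beta^j_i \, \alpha^i \in \Z_{\geq 0}^d.
\]
Setting $\mu := \sum_{i=1}^n \gamma_i \, \alpha^i \in \Z_{\geq 0}^d$, the hypothesis $\beta^j - \gamma \in \Z_{\geq 0}^n$ gives $A_j - \mu = \sum_i (\beta^j_i - \gamma_i) \alpha^i \in \Z_{\geq 0}^d$ for $j = 1, 2$. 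So the claim reduces to showing that $x^{A_1} \prec x^{A_2}$ implies $x^{A_1 - \mu} \prec x^{A_2 - \mu}$.

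Next, I would prove this cancellation directly by splitting into the two cases from the definition of $\prec$. If $|A_1| < |A_2|$, then $|A_1 - \mu| = |A_1| - |\mu| < |A_2| - |\mu| = |A_2 - \mu|$, so $x^{A_1 - \mu} \prec x^{A_2 - \mu}$ immediately. Otherwise $|A_1| = |A_2|$ and $A_1 < A_2$ in the lexicographic order, so there is a minimal index $j$ with $(A_1)_i = (A_2)_i$ for $i < j$ and $(A_1)_j < (A_2)_j$. Componentwise subtraction of $\mu$ preserves these relations: $(A_1)_i - \mu_i = (A_2)_i - \mu_i$ for $i < j$ and $(A_1)_j - \mu_j < (A_2)_j - \mu_j$, while $|A_1 - \mu| = |A_2 - \mu|$. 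Thus $x^{A_1 - \mu} \prec x^{A_2 - \mu}$ in this case as well.

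Alternatively, one could give a short proof by trichotomy using Lemma \ref{mon-sum} directly: since $\prec$ is a total order on monic monomials, if $x^{A_1 - \mu} \not\prec x^{A_2 - \mu}$ then either $A_1 - \mu = A_2 - \mu$, forcing $A_1 = A_2$, or $x^{A_2 - \mu} \prec x^{A_1 - \mu}$, which by Lemma \ref{mon-sum} (adding $\mu$ to both sides) gives $x^{A_2} \prec x^{A_1}$; either conclusion contradicts $x^{A_1} \prec x^{A_2}$.

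I do not anticipate any real obstacle here: the statement is essentially the fact that the graded lexicographic order on $\Z_{\geq 0}^d$ is invariant under componentwise translation on the subset where translates remain nonnegative. The only point to be careful about is the translation between the nonnegativity hypothesis on $\beta^j - \gamma \in \Z_{\geq 0}^n$ and the needed nonnegativity of $A_j - \mu \in \Z_{\geq 0}^d$, which is immediate once the shorthand is expanded.
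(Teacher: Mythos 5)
Your proof is correct. The paper proves this corollary by contrapositive: since $\prec$ is a total order on monic monomials, assume $(x^{\alpha^1},\dots,x^{\alpha^n})^{\beta^1-\gamma} \not\prec (x^{\alpha^1},\dots,x^{\alpha^n})^{\beta^2-\gamma}$, so the two sides are either equal or related by the reverse inequality, and in either case multiplying by $(x^{\alpha^1},\dots,x^{\alpha^n})^{\gamma}$ (via Corollary~\ref{pol-prod-ineq}) contradicts the hypothesis. Your primary argument is a genuinely more direct route: you first unpack the tuple shorthand to see that $(x^{\alpha^1},\dots,x^{\alpha^n})^{\beta^j} = x^{A_j}$ with $A_j = \sum_i \beta^j_i\alpha^i \in \Z_{\geq 0}^d$, so the corollary reduces to the elementary fact that the graded lexicographic order on $\Z_{\geq 0}^d$ is preserved under subtraction of a fixed $\mu$ whenever the differences remain nonnegative; you then verify this directly from the two-case definition of $\prec$. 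This is more self-contained (it uses only the definition rather than Lemma~\ref{mon-sum}/Corollary~\ref{pol-prod-ineq}), and it makes explicit why the nonnegativity hypothesis $\beta^j - \gamma \in \Z_{\geq 0}^n$ is needed, namely to keep $A_j - \mu$ in $\Z_{\geq 0}^d$. Your alternative proof by trichotomy, using Lemma~\ref{mon-sum} after the same expansion, is essentially the paper's contrapositive argument, just appealing to the simpler monomial-level lemma rather than the polynomial-product Corollary~\ref{pol-prod-ineq}, which is available since everything in sight is a single monomial. Both of your variants are sound.
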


\begin{proof}
    This is a consequence of the fact that 
    \[ E_{\beta - \gamma}(x^{\alpha^1}, \dots, x^{\alpha^{n}})E_{\gamma}(x^{\alpha^1}, \dots, x^{\alpha^{n}}) = E_{\beta}(x^{\alpha^1}, \dots, x^{\alpha^{n}})\]
    for any $\beta \in \Z_{\geq 0}^n$ and any $\gamma \in \Z_{\geq 0}^n$ with $\beta - \gamma \in \Z_{\geq 0}^n$,
    and we prove the contrapositive. 

    Since both polynomials we compare are monic monomials in $x$, and $\prec$ gives a total order on such monomials, 
    they are always either equal or comparable via $\prec$. 
    If $E_{\beta^1 - \gamma}(x^{\alpha^1}, \dots, x^{\alpha^{n}}) = E_{\beta^2 - \gamma}(x^{\alpha^1}, \dots, x^{\alpha^{n}})$,
    then 
    \[E_{\beta^1}(x^{\alpha^1}, \dots, x^{\alpha^{n}}) = E_{\beta^2}(x^{\alpha^1}, \dots, x^{\alpha^{n}}).\]
    If $E_{\beta^2 - \gamma}(x^{\alpha^1}, \dots, x^{\alpha^{n}}) \prec E_{\beta^1 - \gamma}(x^{\alpha^1}, \dots, x^{\alpha^{n}})$, Corollary \ref{pol-prod-ineq} implies that 
    \[E_{\beta^2}(x^{\alpha^1}, \dots, x^{\alpha^{n}}) \prec E_{\beta^1 }(x^{\alpha^1}, \dots, x^{\alpha^{n}}).\]
    In both cases, we do not have \[E_{\beta^1}(x^{\alpha^1}, \dots, x^{\alpha^{n}}) \prec E_{\beta^2}(x^{\alpha^1}, \dots, x^{\alpha^{n}}).\]
\end{proof}

\section{Proof of Theorem \ref{pm-bound}}\label{pm-proof}

Let $S$ be an $(n,d)$-BRK-type set, and let $t = (t_1, \dots, t_d)$. Let $\{h_i(t)\}_{i=1}^{n-d}$ and $\Gamma_\rho$ be as in Definition \ref{BRK-type-general}. Suppose, 
towards contradiction, 
that $|S| < \binom{\left\lfloor(q-1)/\ell\right\rfloor + n}{n}$. 
By Lemma \ref{PC}, 
there exists a nonzero polynomial $f \in \F_q[x_1, \dots, x_n]$ with $\deg f \leq \left\lfloor \frac{q-1}{\ell}\right\rfloor$ vanishing on $S$. 
We write $f(x) = \sum_{\beta \in B} b_\beta x^\beta$, where $B \subset \Z_{\geq 0}^n$ is the set of multi-indices $\beta$ with $b_\beta \neq 0$.

We begin by considering the polynomials $\{h_i(t)\}_{i=1}^{n-d}$. 
For each $i$, we may write $h_i(t) = c_{\alpha^i} t^{\alpha^i} + H_i(t)$, 
where $t^{\alpha^i} \succ H_i(t)$. 
We consider $LM(f(t, t^{\alpha^1}, \dots, t^{\alpha^{n-d}}))$, 
the leading monomial obtained by evaluating $f$ at $(t, t^{\alpha^1}, \dots, t^{\alpha^{n-d}})$ 
and considering this as a polynomial in $t$.
That is, we write 
\[f(x) = \sum_{j=1}^N b_{\beta^j} x^{\beta^j} + F(x)\]
for some $N \in \N$, where the $\beta^j$ satisfy 

\[E_{\beta^i}(t, t^{\alpha^1}, \dots, t^{\alpha^{n-d}}) = E_{\beta^j}(t, t^{\alpha^1}, \dots, t^{\alpha^{n-d}}) \]

for all $1 \leq i, j \leq N$, and
\[E_{\beta^1}(t, t^{\alpha^1}, \dots, t^{\alpha^{n-d}}) \succ F(t, t^{\alpha^1}, \dots, t^{\alpha^{n-d}}) \]

Now, let $\rho \in (\F_q \setminus \{0\})^{n-d}$. Since $f$ vanishes on $S$, we must have 
\[f_\rho(t) := f(\Gamma_\rho(t)) = 0\]
for all $t \in \F_q^d$. 
Since the input in each coordinate is a polynomial of total degree at most $\ell$, 
it follows that $\deg f_\rho \leq \deg f \cdot \ell \leq q-1$, 
and thus that $f_\rho(t)$ is the zero polynomial.

Applying Lemma \ref{pol-expansion} to $f_\rho(t)$, we obtain
\begin{align*}
    f_\rho(t) &= \sum_{\beta \in B} b_\beta (\Gamma_\rho(t))^\beta \\
    &= \sum_{\beta \in B} b_\beta\Bigl(E_\beta(t, \rho_1 c_{\alpha^1}t^{\alpha^1}, \dots, \rho_{n-d} c_{\alpha^{n-d}} t^{\alpha^{n-d}}) + F_{\rho, \beta}(t)\Bigr)\\
    &= \sum_{\beta \in B} b_\beta\Bigl(\Bigl(\prod_{i=1}^{n-d} \rho_i^{\beta_{i+d}}c_{\alpha^i}^{\beta_{i+d}}\Bigr) E_\beta(t, t^{\alpha^1}, \dots, t^{\alpha^{n-d}}) + F_{\rho, \beta}(t)\Bigr),
\end{align*}
where $F_{\rho, \beta}(t) \prec E_{\beta; \alpha^1, \dots, \alpha^{n-d}}(t) $ for each $\beta$. 
By our choice of $\{\beta^j\}_{j=1}^N$, 
the leading coefficient of $f_\rho(t)$ is thus given by 
\[P(\rho) = \sum_{j=1}^N b_{\beta^j} \rho^{(\beta^j)'}c^{(\beta^j)'},\]
where $(\beta^j)'$ are defined as in Section \ref{notation}.
This is zero, since $f_\rho$ is the zero polynomial.
But $\rho$ was arbitrary: 
$P$ is thus a nonzero polynomial of total degree at most $\frac{q-1}{\ell}$ that vanishes at every $\rho \in (\F_q \setminus \{0\})^{n-d}$. 
This is the desired contradiction, as Lemma \ref{SZ} asserts that such a polynomial has no more than $\frac{(q-1)^{n-d}}{\ell} < (q-1)^{n-d}$ zeros in $(\F_q \setminus \{0\})^{n-d}$. \qed

\section{Proof of Theorem \ref{mm-bound}}\label{mm-proof}

We obtain our lower bound on the cardinality of $(n,d)$-BRK-type sets of degree $\ell$ from a sequence of lower bounds.

\begin{lemma}\label{mm-claim}
    Let $S$ be an $(n,d)$-BRK-type set of degree $\ell$, and let $k \in \N$ be a multiple of $q$. Let  $D = k(q-1) - 1$ and $M = (\ell + 1)k - 2\ell k/q$. Then 
    \[\binom{M + n - 1}{n} |S| \geq \binom{D + n}{n}.\]
\end{lemma}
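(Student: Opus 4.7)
The plan is to mirror the proof of Theorem \ref{pm-bound}, with the basic polynomial method replaced by its multiplicity-enhanced counterpart. Assume for contradiction that $\binom{M+n-1}{n}|S| < \binom{D+n}{n}$. Lemma \ref{MM-PC} then yields a nonzero polynomial $f \in \F_q[x_1, \dots, x_n]$ of degree at most $D = k(q-1)-1$ vanishing on $S$ with multiplicity $M = (\ell+1)k - 2\ell k/q$. To apply Lemma \ref{mult-inputsubset} on each $\Lambda_\rho \subset S$, I first verify the inequality $\ell(D-\omega) < (M-\omega)q$ for $0 \leq \omega < k$: a direct computation yields $(M-\omega)q - \ell(D-\omega) = (k-\omega)(q-\ell) + \ell$, which is positive in this range. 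Consequently, for every $\rho \in (\F_q \setminus \{0\})^{n-d}$ and every $\beta \in \Z_{\geq 0}^n$ with $|\beta| < k$, the polynomial $f_{\beta,\rho}(t) := f^{(\beta)}(\Gamma_\rho(t))$ is identically zero in $t$.

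Write $f(x) = \sum_{\alpha \in B} b_\alpha x^\alpha$ with $B := \{\alpha : b_\alpha \neq 0\}$, and decompose $h_i(t) = c_{\alpha^i} t^{\alpha^i} + H_i(t)$ with $t^{\alpha^i} = LM(h_i)$, exactly as in the proof of Theorem \ref{pm-bound}. Set $M_\alpha(t) := (t, t^{\alpha^1}, \dots, t^{\alpha^{n-d}})^\alpha$ and choose $\alpha^* \in B$ so that $M^* := M_{\alpha^*}(t)$ is the $\prec$-maximum of $\{M_\alpha : \alpha \in B\}$. The central step is to extract the coefficient of $M^*(t)$ in the identically vanishing polynomial $f_{\beta,\rho}(t) \cdot M_\beta(t)$; multiplying by $M_\beta(t)$ sidesteps the possibility that $M^*/M_\beta$ is not itself a genuine monomial. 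Expanding via Lemma \ref{HD-form} and using Lemma \ref{pol-expansion} coordinate-by-coordinate,
\[
M_\beta(t)\,(\Gamma_\rho(t))^{\alpha-\beta} = (\rho_1 c_{\alpha^1}, \dots, \rho_{n-d} c_{\alpha^{n-d}})^{(\alpha-\beta)'}\, M_\alpha(t) + R_{\alpha,\beta,\rho}(t),
\]
where $R_{\alpha,\beta,\rho}(t) \prec M_\alpha(t)$ by Corollary \ref{pol-prod-ineq} (here I use $M_\beta \cdot M_{\alpha-\beta} = M_\alpha$). Hence every $\alpha$ with $M_\alpha \prec M^*$ makes no contribution to the $M^*$-coefficient, and writing $c := (c_{\alpha^1}, \dots, c_{\alpha^{n-d}})$ that coefficient is exactly
\[
P_\beta(\rho) = \sum_{\alpha\,:\,M_\alpha = M^*} b_\alpha \binom{\alpha}{\beta} c^{(\alpha-\beta)'} \rho^{(\alpha-\beta)'},
\]
which must vanish for every $\rho \in (\F_q \setminus \{0\})^{n-d}$.

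To finish, set $\tilde{b}_\alpha := b_\alpha$ if $M_\alpha = M^*$ and $\tilde{b}_\alpha := 0$ otherwise. The identities $P_\beta(\rho) = 0$ for all $|\beta| < k$ and $\rho \in (\F_q \setminus \{0\})^{n-d}$ are precisely the hypotheses of Lemma \ref{HD-reduction} applied to $\{\tilde{b}_\alpha\}$, noting that $|\alpha| \leq \deg f \leq D < k(q-1)$. Hence $\tilde{b}_\alpha = 0$ for every $\alpha$; in particular $b_{\alpha^*} = 0$, contradicting $\alpha^* \in B$.

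The primary technical obstacle is the coefficient extraction in the middle paragraph: one must rule out lower-order-in-$t$ contributions to the $M^*$-coefficient coming from $\alpha$ with $M_\alpha \prec M^*$, which reduces to the graded-lexicographic machinery (Corollaries \ref{pol-prod-ineq} and \ref{monomial-derivative-dominance}) guaranteeing that $\prec$ is preserved under multiplication by the common factor $M_\beta(t)$. Once $P_\beta(\rho)$ is pinned down in the displayed form, Lemma \ref{HD-reduction} closes the proof.
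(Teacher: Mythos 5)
Your proof is correct and follows the same route as the paper: Lemma \ref{MM-PC} to produce the auxiliary polynomial, Lemma \ref{mult-inputsubset} to kill $f_{\beta,\rho}$ for $|\beta|<k$, the graded-lex machinery to isolate the top coefficient $P_\beta(\rho)$, and Lemma \ref{HD-reduction} to force a vanishing coefficient and reach a contradiction. Your multiplication-by-$M_\beta(t)$ device is a minor streamlining of the paper's case split on whether $\beta^j-\gamma\in\Z_{\geq 0}^n$, and you additionally verify the inequality \eqref{M-D-ineq} explicitly (correctly, as $(M-\omega)q-\ell(D-\omega)=(k-\omega)(q-\ell)+\ell$), which the paper asserts without computation.
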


\begin{proof}

Suppose, again towards contradiction, that there exists $k \in \N$, a multiple of $q$, so that for $D = k(q-1) - 1$ and $M = (\ell + 1)k - 2\ell k/q$, \[\binom{M + n - 1}{n} |S| < \binom{D + n}{n}.\]
By Lemma \ref{MM-PC}, there exists a nonzero polynomial $f \in \F_q[x_1, \dots, x_n]$ with $\deg f \leq D$ vanishing on $S$ with multiplicity $M$. 
We again write $f(x) = \sum_{\beta \in B} b_\beta x^\beta$, where $B \subset \Z_{\geq 0}^n$ is the set of multi-indices $\beta$ with $b_\beta \neq 0$.

We again consider the polynomials $\{h_i(t)\}_{i=1}^{n-d}$. 
For each $i$, we write $h_i(t) = c_{\alpha^i} t^{\alpha^i} + H_i(t)$, where $t^{\alpha^i} \succ H_i(t)$. Define $c = (c_{\alpha^1}, \dots, c_{\alpha^{n-d}}) \in \F_q^{n-d}$.
We again consider $LM(f(t, t^{\alpha^1}, \dots, t^{\alpha^{n-d}}))$, 
the leading monomial obtained by evaluating $f$ at $(t, t^{\alpha^1}, \dots, t^{\alpha^{n-d}})$ and considering this as a polynomial in $t$.
As in the proof of Theorem \ref{pm-bound}, we write
\[f(x) = \sum_{j=1}^N b_{\beta^j} x^{\beta^j} + F(x)\]
where the $\beta^j$ satisfy 
\[E_{\beta^i}(t, t^{\alpha^1}, \dots, t^{\alpha^{n-d}}) = E_{\beta^j}(t, t^{\alpha^1}, \dots, t^{\alpha^{n-d}})\]
for all $1 \leq i, j \leq N$, and
\[E_{\beta^1}(t, t^{\alpha^1}, \dots, t^{\alpha^{n-d}}) \succ F(t, t^{\alpha^1}, \dots, t^{\alpha^{n-d}}).\]
Let $\rho \in (\F_q \setminus \{0\})^{n-d}$. Since $f$ vanishes on $S$ with multiplicity $M$, 
we must have 
\[f_{\gamma, \rho}(t) := f^{(\gamma)}(\Gamma_\rho(t)) = 0\]
for all $t \in \F_q^d$ and all $|\gamma| < M$. 
Since $\ell, D, M$, and $\omega=k-1$ satisfy the inequality \eqref{M-D-ineq}, 
Lemma \ref{mult-inputsubset} states that for $|\gamma| < k$, 
$f_{\gamma, \rho}$ is the zero polynomial. 

Fix some $|\gamma| < k$. 
Applying Lemmas \ref{HD-form} and \ref{pol-expansion} to $f_{\gamma, \rho}(t)$, 
we obtain
\begin{align*}
    f_{\gamma, \rho}(t)
    &= \sum_{\substack{\beta \in B \\ \beta - \gamma \in \Z_{\geq 0}^n}} b_\beta \binom{\beta}{\gamma}\Bigl(\Bigl(\prod_{i=1}^{n-d} \rho_i^{\beta_{i+d} - \gamma_{i+d}}c_{\alpha^i}^{\beta_{i+d} - \gamma_{i+d}}\Bigr) E_{\beta - \gamma}(t, t^{\alpha^1}, \dots, t^{\alpha^{n-d}}) + F_{\rho, \beta, \gamma}(t)\Bigr)
\end{align*}
where $F_{\rho, \beta, \gamma} \prec E_{\beta - \gamma}(t, t^{\alpha^1}, \dots, t^{\alpha^{n-k}})$ for all $\beta$. 
To find the leading coefficient of $f_{\gamma, \rho}(t)$, 
by Lemma \ref{pol-expansion} it thus suffices to compare $E_{\beta - \gamma}(t, t^{\alpha^1}, \dots, t^{\alpha^{n-k}})$ for all $\beta \in B$ with $\beta - \gamma \in \Z_{\geq 0}^n$. 
If $\beta^j - \gamma \in \Z_{\geq 0}^n$ for any $j$ 
(without loss of generality, we assume $j=1$),
Corollary \ref{monomial-derivative-dominance} implies that the coefficient corresponding to the monomial $(t, t^{\alpha^1}, \dots, t^{\alpha^{n-k}})^{\beta^1 - \gamma}$ is 
\[P_{\gamma}(\rho):= \sum_{j=1}^N b_{\beta^j} \binom{\beta^j}{\gamma} \rho^{(\beta^j)' - \gamma'} c^{(\beta^j)' - \gamma'}  = \sum_{\substack{1 \leq j \leq N \\ \beta^j - \gamma \in \Z_{\geq 0}^n}} b_{\beta^j} \binom{\beta^j}{\gamma} \rho^{(\beta^j)' - \gamma'} c^{(\beta^j)' - \gamma'}\]
and is zero, 
where we use that the $\beta^j$ not included in the second sum satisfy $\binom{\beta^j}{\gamma} = 0$ as in Lemma \ref{HD-form}. 
Otherwise, $\binom{\beta^j}{\gamma} = 0$ for all $1 \leq j \leq N$, and hence in this case we also have that $P_{\gamma}(\rho) = 0$. 

Since $|\gamma| < k$ was arbitrary, Lemma \ref{HD-reduction} implies that each $b_{\beta^j}$ is zero. This contradicts our initial assumption that each $b_\beta$ is nonzero.

\end{proof}

Hence for each $k \in \N$, 
$k$ a multiple of $q$, with $D = k(q-1)-1$ and $M = (\ell + 1)k - 2\ell k/q$,
we have 
\[|S| \geq \frac{\binom{D + n}{n}}{\binom{M + n - 1}{n}} = \frac{\prod_{i=1}^n (k(q-1) - 1 + i)}{\prod_{i=1}^n ((\ell + 1)k - 2\ell k/q + i - 1)}.\]
This holds for arbitrarily large $k$; 
taking the supremum over all such $k$, 
we obtain 
\[|S| \geq \frac{(q-1)^n}{((\ell + 1) - 2\ell/q)^n},\]
as desired. \qed

\section{Acknowledgements}

This work was partially supported by an Undergraduate Student Research Award from the Natural Sciences and Engineering Research Council of Canada (NSERC). I would like to thank Malabika Pramanik for her encouragement and many helpful suggestions.

\Addresses 
}

\end{document}